\documentclass[12pt]{amsart}
\usepackage{mathrsfs}
\usepackage{latexsym,amsthm,amssymb,amsmath,amsfonts,graphicx,epstopdf,float}
\usepackage{bbm}
\usepackage{pifont}
\usepackage{cite}
\usepackage{multirow}
\usepackage{appendix}
\usepackage{tikz,xcolor}
\usepackage{graphicx}
\usepackage{subcaption}
\usepackage{setspace}

\usetikzlibrary{decorations.markings}
\usetikzlibrary{intersections}
\usetikzlibrary{calc}

\newtheorem{theorem}{Theorem}[section]
\newtheorem{thm}[theorem]{Theorem}
\newtheorem{prop}{Proposition}[section]

\newtheorem{lemma}{Lemma}[section]
\newtheorem{remark}{Remark}[section]

\numberwithin{equation}{section}


\newcommand{\bp}{{\mathbf{p}}}


\newcommand{\SD}{{\mathcal D}}

 \setlength{\textwidth}{15.0cm}
 \setlength{\textheight}{22.0cm}
 \hoffset=-1cm
\pagestyle {plain}
 \usepackage{color}

\begin{document}

\title{Doubling  property of self-similar measures with overlaps  }

\author{Yu Wang}
\address{Department of Mathematics and Statistics, Central China Normal University, Wuhan, 430079, China}
\email{wy2022@mails.ccnu.edu.cn}

\author{Ya-min Yang $^*$}
\address{Institute of Applied Mathematics, College of Informatics, Huazhong Agricultural University, Wuhan, 430070, China}
\email{yangym09@mail.hzau.edu.cn}

 \thanks {The work is supported by NSFC No.12071167}

 \thanks{{\bf 2010 Mathematics Subject Classification:}  28A80, 26A16\\
 {\indent\bf Key words and phrases:}\ doubling property, self-similar measure, Pisot number}

\thanks{* Corresponding author.}
\begin{abstract}  Recently, Yang, Yuan and Zhang [{\it Doubling properties of self-similar measures and Bernoulli measures on self-affine Sierpi¨½ski sponges,} Indiana Univ. Math. J., \textbf{73} (2024), 475¨C492] characterized when a self-similar measure satisfying the open set condition is doubling. In this paper, we study when   a self-similar measure with overlaps is doubling.

Let $m\geq 2$ and let  $\beta>1$ be the Pisot number satisfying $\beta^m=\sum_{j=0}^{m-1}\beta^j$.
Let  $\mathbf{p}=(p_1,p_2)$ be a probability weight and let $\mu_\bp$ be the self-similar measure associated to
the IFS $\{ S_1(x)={x}/{\beta},
S_2(x)={x}/{\beta}+(1-{1}/{\beta}),\}.$ Yung [...,Indiana Univ. Math. J., ] proved that when $m=2$, $\mu_\bp$ is doubling if and
only if $\bp=(1/2,1/2)$. We show that for $m\geq 3$, $\mu_\bp$ is always non-doubling.
\end{abstract}
\maketitle


 \section{\textbf{Introduction}}

   Let  $1<\beta<2$  be a real number  and set
  \begin{equation}\label{eq:IFS}
  S_1(x)=\frac{x}{\beta}, \quad
S_2(x)=\frac{x}{\beta}+(1-\frac{1}{\beta}),
\end{equation}
then $\{S_1,S_2\}$ is an iterated function system  (IFS) and its  attractor is $[0,1]$. Let $\mathbf{p}=(p_1,p_2)$ be a probability weight, \textit{i.e.}, $p_1,p_2>0$ and $p_1+p_2=1$. Let
\begin{equation}\label{eq:mu}
\mu_\mathbf{p}=p_1\mu_\mathbf{p}\circ S_1^{-1}+p_2\mu_\mathbf{p}\circ S_2^{-1}
\end{equation}
be the self-similar measure on $[0,1]$  associated with IFS $\{S_1, S_2\}$, see Hutchinson \cite{Hutchinson_1981}.
When $\bp=(1/2,1/2)$, $\mu_\bp$ is the famous Bernoulli convolution measure.

There are a lot of works devoted to determine when $\mu_\bp$ is absolutely continuous,
 for example, Erd\"{o}s \cite{Erdos_1939}, Garsia \cite{Garsia_1962},
 Solomyak \cite{Solomyak_1995}, \textit{etc.}.
In this paper, we investigate the doubling property of $\mu_\bp$.

 Doubling measure was first introduced by Beurling and Ahlfors\cite{Beurling_1956} and has become a fundamental concept in analysis on metric space. Recall that a Borel probability measure $\mu$ on a metric space is called a \emph{doubling} measure if
$$\underset{x\in \text{supp}(\mu),~r>0}{\sup}~\frac{\mu(B(x,2r))}{\mu(B(x,r))}<\infty,$$
where $\text{supp}(\mu)$ means the support of $\mu$.  Recently, there
are several works focus on studying when a Bernoulli measure of an IFS is doubling. Olsen \cite{Olsen_1995} proved that under the strong separation condition, every self-similar measure is doubling. Mauldin and Urba\'nski \cite{Mauldin_1996} proved that under the open set condition, the canonical self-similar measure is doubling. Yung \cite{Yung07} provided necessary and sufficient conditions for self-similar measures to be doubling. Employing  a novel technique
 called \emph{weighted neighbor graph},  Yang, Yuan and Zhang \cite{Yang_2024} completely characterize when  a self-similar measure satisfying the open set condition  is doubling, or when a Bernoulli measure on a self-affine Sierpinski sponge is doubling.

 Let $m\geq 2$ be an integer and let $\beta>1$ be a root of the polynomial
\begin{equation}\label{eq:Pisot}
P(x)=x^m-\sum_{j=0}^{m-1} x^j ,
\end{equation}
 then $\beta$ is a Pisot number (\cite{Bertin92}). When $m=2$, $\beta$ is the golden number.
Yung proved that

\begin{prop}\label{prop:Yung} If $m=2$, then  $\mu_{\bp}$ is doubling if and only if $p_1=p_2=1/2$.
\end{prop}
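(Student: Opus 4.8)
The plan is to exploit the finite-type structure of the golden-ratio IFS and to reduce doubling to the comparability of adjacent \emph{net intervals}, the obstruction being a measure valley manufactured by the overlap. First I would record the two algebraic facts that drive everything. Since $\beta^2=\beta+1$ we have $1/\beta=\beta-1$, so the translation of $S_2$ equals $1-1/\beta=2-\beta=\beta^{-2}$; moreover the golden relation $\beta^{-1}=\beta^{-2}+\beta^{-3}$ produces the exact coincidence $S_2S_1S_1=S_1S_2S_2$, whose common image is the level-one overlap $O=S_1([0,1])\cap S_2([0,1])=[2-\beta,\beta-1]$, an interval of length $\beta^{-3}$. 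Consequently the IFS is of finite type: the endpoints of the level-$n$ cylinders $S_\omega([0,1])$, $|\omega|=n$, cut $[0,1]$ into net intervals whose lengths are all of order $\beta^{-n}$ and whose local overlap patterns realize only finitely many neighbor types; in particular each net interval is covered by a uniformly bounded number $\le K$ of level-$n$ cylinders, and the subintervals $S_\omega^{-1}(J)$ coming from covering cylinders $\omega$ range over a finite list $T_1,\dots,T_L\subset[0,1]$, each of fixed positive $\mu_\bp$-mass. I would take this finite-type description as the working input.

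For the \emph{if} direction ($p_1=p_2=1/2$), I would iterate self-similarity to $\mu_\bp=\sum_{|\omega|=n}2^{-n}\,\mu_\bp\circ S_\omega^{-1}$, so that for a generation-$n$ net interval $J$ one gets $\mu_\bp(J)=2^{-n}\sum_{\omega}\mu_\bp(S_\omega^{-1}J)$, the sum running over the at most $K$ covering cylinders. By finite type each term is one of the fixed values $\mu_\bp(T_i)\in(0,1]$, so $\mu_\bp(J)\asymp 2^{-n}$ uniformly in $J$. Since generation-$n$ net intervals also have comparable lengths $\asymp\beta^{-n}$, a ball $B(x,r)$ with $\beta^{-n}\asymp r$ meets boundedly many of them, whence $\mu_\bp(B(x,r))\asymp 2^{-n}\asymp\mu_\bp(B(x,2r))$ and $\mu_\bp$ is doubling. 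The equal-weight hypothesis enters exactly in making $p^\omega=2^{-n}$ independent of $\omega$, which decouples $\mu_\bp(J)$ from the (wildly varying) combinatorics of $\omega$.

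For the \emph{only if} direction, the heart of the matter, suppose $p_1\ne p_2$, say $p_1>p_2$ (the other case follows from the reflection $x\mapsto1-x$, which sends $\mu_\bp$ to $\mu_{(p_2,p_1)}$). Writing $D(s)=\mu_\bp([0,s])$ and $E(s)=\mu_\bp([1-s,1])$, the one-sided non-overlap recursions give $D(s)=p_1D(\beta s)$ and $E(s)=p_2E(\beta s)$, hence $D(\beta^{-n})\asymp p_1^n$ and $E(\beta^{-n})\asymp p_2^n$. For the two-sided functions $V(s)=\mu_\bp([\beta-1-s,\beta-1])$ (just inside $O$) and $R(s)=\mu_\bp([\beta-1,\beta-1+s])$ (just outside $O$), splitting $\mu_\bp$ on $O$ through both branches $S_1,S_2$ and using the coincidence above yields
\[
V(s)=p_1E(\beta s)+p_1p_2\,V(\beta^2 s),\qquad R(s)=p_1p_2\,R(\beta^2 s)+p_2^{2}\,D(\beta^2 s).
\]
Setting $v_n=V(\beta^{-n})$, $r_n=R(\beta^{-n})$, the forcing in the $V$-relation decays like $p_2^n\ll(p_1p_2)^{n/2}$, so the homogeneous rate wins and $v_n\asymp(p_1p_2)^{n/2}$, whereas the forcing in the $R$-relation is of order $p_1^n\gg(p_1p_2)^{n/2}$ and dominates, giving $r_n\asymp p_1^n$. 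Thus the mass just inside the overlap is exponentially smaller than just outside: $v_n/r_n\asymp(p_2/p_1)^{n/2}\to0$. I would then cash this in with explicit balls: taking $x_n=\beta-1-\beta^{-n}$ and $\rho_n=\beta^{-n}$, one has $B(x_n,\rho_n)\subset O$ with $\mu_\bp(B(x_n,\rho_n))\asymp(p_1p_2)^{n/2}$, while $B(x_n,2\rho_n)\supset[\beta-1,\beta-1+\beta^{-n})$ forces $\mu_\bp(B(x_n,2\rho_n))\gtrsim r_n\asymp p_1^n$; hence the doubling ratio is $\gtrsim(p_1/p_2)^{n/2}\to\infty$ and $\mu_\bp$ is not doubling.

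I expect the main obstacle to be the finite-type bookkeeping underlying the first paragraph: proving rigorously that the covering multiplicity is bounded and that the subintervals $S_\omega^{-1}(J)$ realize only finitely many types (equivalently that the net-interval lengths take finitely many values). This is precisely what legitimizes both the uniform estimate $\mu_\bp(J)\asymp2^{-n}$ and the reduction of doubling to adjacent net intervals. A secondary delicate point is the resonant borderline $p_1=p_2$, where the rates $p_2^n$ and $(p_1p_2)^{n/2}$ coincide, the valley degenerates (one finds $v_n\asymp r_n\asymp n\,2^{-n}$, an extra logarithmic factor that cancels in the ratio), and it is exactly this cancellation that the equal-weight argument of the \emph{if} direction must absorb.
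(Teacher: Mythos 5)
Your \emph{only if} direction is essentially sound and follows a genuinely different route from the paper's: you derive renewal-type functional equations for the mass on the two sides of the overlap endpoint $\beta-1$ and compare the decay rates $(p_1p_2)^{n/2}$ and $p_1^n$, whereas the paper follows an explicit sequence of triples of partition points and multiplies $3\times 3$ transition matrices along it. Both isolate the same phenomenon (the paper works at the mirror-image point $2-\beta$), and your recursions for $V$ and $R$ check out against the identity $S_1S_2S_2=S_2S_1S_1$.

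The \emph{if} direction, however, has a fatal gap. You claim that each level-$n$ net interval is covered by a uniformly bounded number $K$ of level-$n$ cylinders and deduce $\mu_\bp(J)\asymp 2^{-n}$ uniformly in $J$. Both claims are false. There are $2^n$ cylinders of level $n$, each of whose interiors meets exactly two net intervals (the paper's Lemma 2.3), while the number of net intervals is $\#X_n-1\asymp\beta^n$ with $\beta=(1+\sqrt5)/2<2$; hence the average covering multiplicity is $\asymp(2/\beta)^n\to\infty$ and no uniform bound $K$ exists. Equally decisively, if $\mu_\bp(J)\asymp2^{-n}$ held for all of the $\asymp\beta^n$ net intervals, the total mass would be $\asymp(\beta/2)^n\to0$, contradicting $\mu_\bp([0,1])=1$. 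In truth the measures of level-$n$ net intervals are badly non-comparable to one another: at the left endpoint $\mu_\bp([0,\beta^{-n}])\asymp2^{-n}$, while a typical net interval carries mass of order $\beta^{-n}$. So doubling cannot be obtained by comparing all net intervals of a given level; one must show only that \emph{adjacent} net intervals have comparable mass, and this requires tracking how the weight vectors evolve. That is exactly what the paper's case analysis does: along the cycles $(S_1S_2)^k$ the weight triples take the form $\tfrac{1}{2^{2k+1}}\left(k(y+z),\,y+z,\,k(y+z)+z\right)$, whose entries differ by an unbounded factor $k$, yet the sums of adjacent pairs remain $2$-balanced. This polynomial factor is precisely the resonance you flag at the end of your proposal, but the uniform estimate you propose cannot absorb it; to repair the argument you would need to classify the finitely many neighbor types (the paper's labels $S_0,S_1,S_2,S_3,H_1,H_2$ and the associated transition matrices) and verify, path by path, that the adjacent-pair ratios stay bounded.
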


In this paper, we show that

\begin{thm}\label{thm:main} For $m\geq 3$, $\mu_\mathbf{p}$ is always  non-doubling.
\end{thm}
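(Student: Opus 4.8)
The plan is to exhibit, for every probability weight $\bp$, an interior point at which $\mu_\bp$ charges one side far more heavily than the other at arbitrarily small scales, which is incompatible with doubling. The two candidate points are $x^{\ast}=1-1/\beta$ and its mirror $1/\beta$. They are interchanged by the reflection $R(x)=1-x$, which conjugates the IFS by swapping $S_1\leftrightarrow S_2$; hence $R_{\ast}\mu_{(p_1,p_2)}=\mu_{(p_2,p_1)}$ and $\mu_{(p_1,p_2)}$ is doubling if and only if $\mu_{(p_2,p_1)}$ is. So it suffices to treat the case $p_1\ge p_2$ at $x^{\ast}$, the case $p_1\le p_2$ following by symmetry at $1/\beta$; together these cover all $\bp$. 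I would first record the self-similar relation $\mu_\bp([a,b])=p_1\,\mu_\bp(S_1^{-1}[a,b])+p_2\,\mu_\bp(S_2^{-1}[a,b])$ (with $\mu_\bp$ supported on $[0,1]$) and reduce doubling to comparability of the two adjacent half-intervals at $x^{\ast}$. Writing $L(h)=\mu_\bp([x^{\ast}-h,x^{\ast}])$ and $Rt(h)=\mu_\bp([x^{\ast},x^{\ast}+h])$, a short ball-covering argument shows that if $\mu_\bp$ is doubling with constant $C$ then $Rt(h)\le C^{2}L(h)$ for all small $h$; thus it is enough to prove $Rt(h)/L(h)\to\infty$.

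The key structural step is to follow the inverse orbit of $x^{\ast}$. Set $x_0=x^{\ast}$, $x_1=S_1^{-1}(x_0)=\beta-1$, and $x_k=S_2^{-1}(x_{k-1})$ for $2\le k\le m$; solving the resulting linear recursion gives the closed form $x_k=1-\beta^{\,k-1-m}$, so that $x_m=x_0$. The decisive point is that $m\ge3$ forces $\beta$ to exceed the golden number, equivalently $\beta(\beta-1)>1$; this yields $\beta x_k>1$ for $1\le k\le m-1$, so the $S_1$-branch of the functional equation escapes $[0,1]$ at each of $x_1,\dots,x_{m-1}$. Consequently the left neighbourhood of $x^{\ast}$ is governed by one application of $S_1$ followed by $m-1$ applications of $S_2$, cycling back to $x^{\ast}$, which gives the homogeneous renewal
\[
L(h)=p_1 p_2^{\,m-1}\,L(\beta^{m}h),\qquad\text{so}\qquad L(h)\asymp h^{\gamma_L},\quad \gamma_L=\frac{-\log(p_1 p_2^{\,m-1})}{m\log\beta}.
\]
The right neighbourhood traverses the same cycle but additionally collects the $S_2$-contribution coming from $0$, giving the inhomogeneous renewal $Rt(h)=p_1 p_2^{\,m-1}\,Rt(\beta^{m}h)+p_2\,\mu_\bp([0,\beta h])$, where $\mu_\bp([0,s])\asymp s^{\gamma_0}$ with $\gamma_0=-\log p_1/\log\beta$ (from $\mu_\bp([0,\beta^{-n}])=p_1^{\,n-1}\mu_\bp([0,1/\beta])$).

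Finally I would compare the two exponents. An elementary computation gives $\gamma_0\le\gamma_L$ iff $(m-1)(\log p_2-\log p_1)\le 0$, i.e.\ iff $p_2\le p_1$. When $p_1>p_2$ the inequality is strict, and already the trivial bound $Rt(h)\ge p_2\,\mu_\bp([0,\beta h])\asymp h^{\gamma_0}$ against $L(h)\asymp h^{\gamma_L}$ yields $Rt(h)/L(h)\gtrsim h^{\gamma_0-\gamma_L}\to\infty$. The delicate case is $p_1=p_2=1/2$, where $\gamma_0=\gamma_L$ and the forcing term resonates with the homogeneous solution; iterating the renewal toward small scales then produces $Rt(h)\asymp h^{\gamma_L}\log(1/h)$ while $L(h)\asymp h^{\gamma_L}$, so the ratio still diverges. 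This resonant balanced case is the main obstacle, and it is exactly the configuration that stays doubling when $m=2$: there $\beta$ is the golden number, $\beta(\beta-1)=1$, the $S_1$-branch no longer escapes at $\beta-1$, and the extra contribution restores the balance for $\bp=(1/2,1/2)$. Thus the two points demanding care are the escape of the $S_1$-branch for all $1\le k\le m-1$ (the number-theoretic heart, equivalent to $\beta$ exceeding the golden number, hence to $m\ge 3$) and the logarithmic divergence in the resonant case; once both are in hand, non-doubling follows at $x^{\ast}$ for $p_1\ge p_2$ and at $1/\beta$ for $p_1\le p_2$, covering every $\bp$.
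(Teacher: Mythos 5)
Your proposal is correct, and it takes a genuinely different route from the paper. The paper never localizes at a single point: it reduces doubling to uniform comparability of the measures of adjacent basic intervals (its Lemma 4.1), converts that to boundedness of ratios of \emph{weight triples} $W_n$ (Lemma 4.2), and then follows the specific family of triples based at $z_{n,1}=S_{112^{n-2}}(0)$, whose labels cycle with period $m$ under the substitution $\sigma$; the divergence is read off from the explicit power $(M_2M_1^{m-1})^k$ of transition matrices. You instead fix the single point $x^{\ast}=1-1/\beta$ and exploit that its inverse orbit $x_0\mapsto S_1^{-1}x_0\mapsto S_2^{-1}\cdots$ is periodic of period $m$ with the $S_1$-branch escaping $[0,1]$ at $x_1,\dots,x_{m-1}$ precisely because $\beta(\beta-1)>1$ for $m\geq 3$; this yields a homogeneous renewal for $L(h)$ and an inhomogeneous one for $Rt(h)$, and the unboundedness of $Rt/L$ comes from an exponent gap when $p_1>p_2$ and from logarithmic resonance when $p_1=p_2$. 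I checked the key computations: $x_k=1-\beta^{k-1-m}$ does follow from $\beta^{m+1}-2\beta^m+1=0$, the escape condition reduces to $\beta$ exceeding the golden ratio (true for $m\geq3$ since $P(\phi)=\phi-\phi^{m-1}<0$), the exponent comparison $\gamma_0<\gamma_L\iff p_1>p_2$ is right, and the resonant sum indeed produces $Rt(\beta^{-mn}h_0)\asymp n\,2^{-mn}$ against $L\asymp 2^{-mn}$. Your two divergence mechanisms are structurally the same as the two terms in the paper's ratio $R_k$ in \eqref{eq:Rk}, so the approaches are secretly computing the same asymmetry, but yours is more self-contained (no substitution machinery, and it transparently explains why $m=2$ with $\bp=(1/2,1/2)$ survives), whereas the paper's triple/matrix formalism is what generalizes to an algorithm for arbitrary Pisot $\beta$ as claimed in its Remark 1.1. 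The only places needing written-out care are the uniform smallness of $h$ over the length-$m$ cycle when deriving the renewal equations, and the $\asymp$ sandwiching of $L$ and $\mu_{\bp}([0,s])$ via monotonicity; both are routine.
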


It is somehow surprising that for $m\geq 3$, even for $\bp=(1/2,1/2)$, $\mu_\bp$ is not doubling.

\begin{remark} \emph{(i) If $\beta\in (1, 2)$ is a Pisot number, the method in this paper actually gives us an algorithm to determine where $\mu_{\bp}$ is doubling.  However, we boldly conjecture that if $1<\beta<2$ is a Pisot number other than the golden ratio, then $\mu_{\bp}$ is always non-doubling.}

\emph{(ii) If $\beta$ is not a Pisot number, then we even do not have an algorithm to check whether $\mu_{\bp}$ is doubling. }
\end{remark}

The paper is organized as follows. In Section 2, we introduce a  substitution related to the Pisot number $\beta$,
and obtain a graph-directed structure  of  the interval $[0,1]$. In Section 3, we introduce the weight function and obtain
 a new criterion of doubling property of $\mu_{\bp}$.    We prove Theorem 1.1 in Section 4. In Section 5, we give an alternative proof of Proposition \ref{prop:Yung}, a result of Yung.

\section{\textbf{A graph-directed structure of $[0,1]$}}

We fix $m\geq 2$, and let $\beta$ be the Pisot number determined by \eqref{eq:Pisot}. For simplicity, we will denote $$0.a_1\dots a_n=\sum_{j=1}^n a_j/\beta^j.$$
 Recall that $\{S_1,S_2\}$ is the IFS defined by \eqref{eq:IFS}. We denote $\Sigma=\{1,2\}$, and let $\Sigma^k=\{i_1\dots i_k;~ i_j\in \Sigma\}$ and $\Sigma^*=\bigcup_{k=0}^\infty\Sigma^k$. For $I=i_1\cdots i_k$, we call it a word of length $k$, and we use $|I|$ to denote the length of  $I$.

First, we define basic intervals as follows.
  Set $X_0=\{0,1\}$ and
\begin{equation}
X_n=\{S_I(0);|I|=n\}\cup \{1\}.
\end{equation}
Let us arrange the points in $X_n$ in ascending order as
\begin{equation}
 0=a_{n,0}<a_{n,1}<\cdots<a_{n,\#X_n-1}=1,
\end{equation}
where $\#X_n$ denotes the cardinality of $X_n$.  We call the interval $[a_{n,j}, a_{n,j+1}]$  a  \emph{basic interval} of  rank $n$.

Next, we define a sequence $(d_j)_{j=1}^m$ as follows.
Set  $d_0=1$, $d_1=\beta-1=0.1^{m-1}$ (in base $\beta$), and set
\begin{equation}\label{eq:D}
d_{j+1}=\beta d_j-d_1, \quad \text{ for }  1\leq j\leq m-1.
\end{equation}
Then
$$d_j=0.\underbrace{1\cdots1}_{m-j}
0\underbrace{1\cdots1}_{j-1}, \quad \text{ for }  2\leq j\leq m.$$
 It is seen that $d_0>d_1>\cdots>d_m$.
Denote
\begin{equation}
\mathcal{D}=\{d_0,d_1,
\cdots d_m\}.
\end{equation}
Since $\beta d_0-d_1=d_0$ and $\beta d_m-d_1=0$, we have
\begin{equation}\label{eq:D_0}
\beta \SD-d_1\subset \SD\cup\{0\}.
\end{equation}

 \indent For $I,J\in \Sigma^*,$ we use $I\ast J$ to denote the concatenation of them.
\begin{lemma} The distance between two adjacent points in $X_n$ belongs to
\begin{equation}
 \beta^{-n}\mathcal{D}.
\end{equation}

\end{lemma}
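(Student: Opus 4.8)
The plan is to induct on $n$. For $n=0$ we have $X_0=\{0,1\}$, whose single gap is $1=d_0\in\mathcal D$, which settles the base case. For the inductive step, write $Y_n=\{S_I(0):|I|=n\}$, so that $X_n=Y_n\cup\{1\}$ and, splitting off the first letter of $I$, $Y_{n+1}=S_1(Y_n)\cup S_2(Y_n)$. Since $S_1(1)=1/\beta$ and $S_2(1)=1$, this yields the set identity $S_1(X_n)\cup S_2(X_n)=X_{n+1}\cup\{1/\beta\}$. Thus $X_{n+1}$ is obtained from $W_{n+1}:=S_1(X_n)\cup S_2(X_n)$ by deleting the single point $1/\beta$ whenever $1/\beta\notin X_{n+1}$. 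Because $S_1,S_2$ are similarities of ratio $1/\beta$, each maps a rank-$n$ gap of length $\beta^{-n}d_j$ to a segment of length $\beta^{-(n+1)}d_j$; hence, before they interact, both $S_1(X_n)$ and $S_2(X_n)$ already have all their gaps in $\beta^{-(n+1)}\mathcal D$ by the induction hypothesis.

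The two images $S_1(X_n)\subset[0,1/\beta]$ and $S_2(X_n)\subset[1-1/\beta,1]$ agree with $X_{n+1}$ away from the overlap $O:=[1-1/\beta,1/\beta]$, where the gaps are literally $(1/\beta)$ times gaps of $X_n$ and hence lie in $\beta^{-(n+1)}\mathcal D$. All the work is therefore concentrated in $O$, where the points of $S_1(X_n)$ and of $S_2(X_n)$ are superimposed. I would analyze this superposition through the additive relations packaged in \eqref{eq:D_0}, namely $\beta d_0=d_0+d_1$, $\beta d_j=d_1+d_{j+1}$ for $1\le j\le m-1$, and $\beta d_m=d_1$: these are exactly the identities that allow a $\beta^{-(n+1)}\mathcal D$-length to be cut by an interior point into sub-lengths that again belong to $\beta^{-(n+1)}\mathcal D$, and—crucially—that allow the two pieces flanking the deleted point $1/\beta$ to recombine into a single $\mathcal D$-length.

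To make the bookkeeping go through I would strengthen the inductive hypothesis: instead of merely asserting that the gaps of $X_n$ lie in $\beta^{-n}\mathcal D$, I would record the \emph{ordered word} of gap types over the alphabet $\{0,1,\dots,m\}$, together with the types of the gaps flanking the distinguished points $1-1/\beta$ and $1/\beta$. One then checks that the passage $X_n\rightsquigarrow X_{n+1}$ acts on this data by finitely many local replacement rules (a substitution, respectively a graph-directed refinement), each rule being an instance of the relations above. This is precisely the graph-directed structure announced for this section, and verifying that the finitely many local configurations close up under refinement is the technical heart of the argument.

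The main obstacle lies inside $O$ for $m\ge 3$, where two phenomena genuinely occur (they already appear at rank $2\rightsquigarrow 3$): a point of $S_2(X_n)$ can fall strictly inside a gap of $S_1(X_n)$ and vice versa, so a consecutive pair of $X_{n+1}$ need not be consecutive in either single copy; and the point $1/\beta$ must be deleted, producing two neighbouring sub-lengths that are individually \emph{not} of the form $\beta^{-(n+1)}d_j$ and become admissible only after merging. By contrast, for $m=2$ the two families interlace without genuine crossings, and the induction is essentially immediate. Proving that the Pisot relation \eqref{eq:Pisot} forces every length created by such a crossing, and every length created by the merge at $1/\beta$, back into the finite family $\beta^{-(n+1)}\mathcal D$ is the step I expect to demand the most care; it is also what obstructs a naive self-similar induction and motivates the graph-directed formalism.
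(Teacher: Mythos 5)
There is a genuine gap: you set up the induction and correctly isolate where the difficulty lies, but you never carry out the step you yourself identify as ``the technical heart.'' Your decomposition $X_{n+1}\cup\{1/\beta\}=S_1(X_n)\cup S_2(X_n)$ (splitting off the \emph{first} letter of $I$) forces you to analyze how two scaled copies of $X_n$ interlace inside the overlap $[1-1/\beta,\,1/\beta]$, and to control the lengths created both by genuine crossings and by the merge at the deleted point $1/\beta$. You propose strengthening the hypothesis to track the whole ordered word of gap types and to verify closure of finitely many local configurations, but none of this is actually done, and it is not obvious that the bookkeeping closes up; as written, the statement is not proved.

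The paper avoids this entirely by splitting off the \emph{last} letter instead: since $S_{I*1}(0)=S_I(0)$ and $S_{I*2}(0)=S_I(0)+\beta^{-(n+1)}d_1$, one gets
$X_{n+1}=X_n\cup\bigl(\,(X_n\setminus\{1\})+\beta^{-(n+1)}d_1\,\bigr)\cup\{1\}$,
i.e.\ every point of $X_{n+1}$ is either an old point or sits at the fixed distance $\beta^{-(n+1)}d_1$ to the right of exactly one old point. Because $\beta^{-(n+1)}d_1=\beta^{-n}d_m=\min\beta^{-n}\mathcal D$, each new point lands inside (or at the right endpoint of) the gap immediately to the right of its parent, so no crossings occur and no word-level bookkeeping is needed: each old gap of length $\beta^{-n}d_i$ is cut into $\beta^{-(n+1)}d_1$ and $\beta^{-(n+1)}(\beta d_i-d_1)$, and \eqref{eq:D_0} closes the induction in one line. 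If you want to salvage your route, the cleanest fix is to switch to this last-letter decomposition; otherwise you must actually exhibit and verify the finite list of local replacement rules in the overlap region, which is substantially more work and is precisely what your write-up defers.
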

\begin{proof}
We prove the lemma by induction on $n$.

 If $n=1$, then $S_2(0)-S_1(0)=\beta^{-1}\cdot d_1$, $1-S_2(0)=\beta^{-1}\cdot d_0$,
  and the lemma holds.
\indent Suppose the lemma holds for $n=k(k\geq 1)$. We pick $a_{k,j}\in X_k$, $j\neq \#X_k-1$ and suppose $a_{k,j}=S_I(0)$ for some $I\in \Sigma^k$, then $S_{I*1}(0)=a_{k,j}$ and $$S_{I*2}(0)=S_{I}\circ S_2(0)=
\beta^{-(k+1)}\cdot d_1+a_{k,j}.$$
Since $S_{I*2}(0)-S_{I*1}(0)=\beta^{-k}\cdot d_m  = \min \beta^{-k}\mathcal{D}$, so $a_{k,j}<S_{I*2}(0)\leq a_{k,j+1}$. Then
\begin{equation}
X_{k+1}=\{a_{k,j},a_{k,j}+\beta^{-(k+1)}\cdot d_1;a_{k,j}\in X_k\}\cup \{1\}.
\end{equation}
We see that the distance between two adjacent points in $X_{k+1}$ is either $\beta^{-(k+1)}\cdot d_1$ or $a_{k,j+1}-(a_{k,j}+\beta^{-(k+1)}\cdot d_1)$.
The later one belongs to $\beta^{-(k+1)}(\beta \mathcal{D}-d_1) $  by inductive hypothesis,  and hence belongs to $\beta^{-(k+1)}\mathcal{D}$ by  \eqref{eq:D_0}.  The lemma is proved.
\end{proof}

Next, we define a word  $T_n=t_{n,1}\cdots t_{n,\#X_n-1}$  by
\begin{equation*}
  t_{n,j}= \beta^n(a_{n,j}-a_{n,j-1}) \quad \text{for} \quad 1\leq j\leq \#X_n-1.
\end{equation*}
By Lemma 2.1, $T_n$ is a word over $\SD$.
For convenience,  we define $t(a_{n,j-1})=t_{n,j}$, and call $t_{n,j}$ be the label of $a_{n,j-1}$ for $1\leq j\leq \#X_n-1$.

Let $\sigma$ be a substitution over $\SD$ defined by:
$$ \sigma: \left \{
\begin{array}{l}
  d_0\mapsto d_1d_0,\\
   d_i\mapsto d_1d_{i+1},   \text{ for } 1\leq i\leq m-1, \\
    d_m\mapsto d_1.
    \end{array}
    \right .
$$

\begin{remark}\emph{We remark that if we restrict $\sigma$ to the letters $d_1,\dots, d_m$, then $\sigma$ coincides with
the substitution corresponding to the $\beta$-numeration system. }
\end{remark}

\begin{thm}
 $T_n=\sigma^n(d_0)$   for all $n\geq 1$.
\end{thm}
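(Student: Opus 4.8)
The plan is to upgrade the statement to the recursion $T_{n+1}=\sigma(T_n)$ for every $n\geq 0$, and to combine it with the trivial base case $T_0=d_0$ (the single basic interval $[0,1]$ has length $1=d_0$, so its label is $d_0$). Then $T_n=\sigma^n(T_0)=\sigma^n(d_0)$ follows by induction, which gives the theorem for all $n\geq 1$. So the whole content is to show that refining from rank $n$ to rank $n+1$ transforms the label word exactly as $\sigma$ acts, letter by letter.

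The input is the refinement structure already extracted in the proof of Lemma 2.1, namely that
$$X_{n+1}=X_n\cup\{a_{n,j}+\beta^{-(n+1)}d_1:\ 0\leq j\leq \#X_n-2\},$$
so each point $a_{n,j}=S_I(0)$ with $j<\#X_n-1$ contributes exactly one new point $a_{n,j}+\beta^{-(n+1)}d_1=S_{I*2}(0)$ to its right, while the rightmost point $1$ (which is not of the form $S_I(0)$) contributes nothing. The key local claim is that inside a fixed rank-$n$ basic interval $[a_{n,j},a_{n,j+1}]$ the only new point of $X_{n+1}$ that can appear is the one generated by its own left endpoint: its left endpoint $a_{n,j}$ is always a genuine $S_I(0)$ (never the boundary point $1$, which is the rightmost point of $X_n$), whereas the point generated by $a_{n,j+1}$ lies at or beyond $a_{n,j+1}$ and the point generated by $a_{n,j-1}$ lies at or before $a_{n,j}$, so no foreign point intrudes. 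Establishing this locality, together with the correct handling of the boundary point $1$, is the step that needs the most care, and I expect it to be the main obstacle.

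Granting locality, the rest is a case analysis driven by the identity $\beta^{-(n+1)}d_1=\beta^{-n}d_m$, which follows from $\beta d_m-d_1=0$. Write the label of $[a_{n,j},a_{n,j+1}]$ as $d_i$, so its length is $\beta^{-n}d_i$. If $d_i=d_m$, the new point $a_{n,j}+\beta^{-(n+1)}d_1$ coincides with $a_{n,j+1}$, no interior point is added, and the interval survives as a single rank-$(n+1)$ interval of length $\beta^{-n}d_m=\beta^{-(n+1)}d_1$, hence of label $d_1$; this matches $\sigma(d_m)=d_1$. If $d_i\neq d_m$ the new point lies strictly inside, splitting the interval into a left piece of length $\beta^{-(n+1)}d_1$ (label $d_1$) and a right piece of length $\beta^{-(n+1)}(\beta d_i-d_1)$ (label $\beta d_i-d_1$). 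Here the relation $\beta d_0-d_1=d_0$ and \eqref{eq:D} give $\beta d_i-d_1=d_0$ when $i=0$ and $\beta d_i-d_1=d_{i+1}$ when $1\leq i\leq m-1$, so the resulting label blocks are $d_1d_0=\sigma(d_0)$ and $d_1d_{i+1}=\sigma(d_i)$; the membership in $\SD$ is exactly \eqref{eq:D_0}.

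Finally, since each rank-$n$ interval is subdivided locally and its subintervals inherit its left-to-right position, reading off the rank-$(n+1)$ labels amounts to replacing each letter $d_i$ of $T_n$ by the word $\sigma(d_i)$ and concatenating in order. This is precisely $T_{n+1}=\sigma(T_n)$, closing the induction. The arithmetic in the third paragraph is forced by \eqref{eq:D} and \eqref{eq:D_0} and is routine; the genuine work is the locality and boundary bookkeeping of the second paragraph.
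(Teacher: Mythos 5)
Your proposal is correct and follows essentially the same route as the paper: both reduce the theorem to the recursion $T_{n+1}=\sigma(T_n)$ and verify it by the same local case analysis ($d_m\mapsto d_1$, $d_0\mapsto d_1d_0$, $d_i\mapsto d_1d_{i+1}$) using $X_{n+1}=X_n\cup(X_n+\beta^{-(n+1)}d_1)$ and the identity $\beta^{-(n+1)}d_1=\beta^{-n}d_m$. The locality claim you single out as the delicate point is exactly what the paper disposes of via $d_m=\min\SD$ in the proof of Lemma 2.1, so nothing is missing.
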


\begin{proof} If $n=1$, then $X_1=\{0, d_1/\beta, 1\}$, so $T_1=d_1d_0=\sigma(d_0)$.

To prove the lemma, we only need to show that $T_{k+1}=\sigma(T_k)$ for $k\geq 1$.
Pick $1\leq j\leq |X_k|-1$ and denote $t_{k,j}=d_i$, then the length of the interval $[a_{k,j-1},a_{k,j}]$ is $\beta^{-k} d_i$.
 Since
 $$X_{k+1}=X_k\cup \left (X_k+\frac{d_1}{\beta^{(k+1)}}\right ),$$
 in $X_{k+1}$,
 there is at most one point between $a_{k,j-1}$ and $a_{k,j}$, and this point is
 $$c=a_{k,j-1}+\beta^{-(k+1)}  d_1.$$
     If $1\leq i\leq m-1$,  then $a_{k,j-1}<c<a_{k,j}$, so $c$ breaks $[a_{k,j-1}, a_{k,j}]$ into two subintervals, and an easy calculation shows that the label of these subintervals are   $d_1$ and  $d_{i+1}$.
 In terms of substitution, we have $d_i\mapsto d_1d_{i+1}$.

   If $i=0$, similar as above,  we have   $d_0\mapsto d_1d_0$.

    If $i=m$, then $c=a_{k,j}$, and we have $d_m\mapsto d_1$.

    Hence we have $T_{k+1}=\sigma(T_{k})$, the lemma is proved.

\end{proof}

\begin{lemma}  Let $I\in \Sigma^n(n\geq 1)$. If $S_I(0)=a_{n,j}$ and  $j\leq\#X_n-3$, then
$$[a_{n,j},a_{n,j+1}]\subset S_I([0,1]) \subseteq [a_{n,j},a_{n,j+2}].$$
Consequenctly, the interior of $S_I([0,1])$ intersects exactly two basic intervals of rank $n$.
\end{lemma}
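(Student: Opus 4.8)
The plan is to translate both inclusions into inequalities about consecutive labels of $T_n$, and then read off these inequalities from the substitution structure of Theorem 2.1. Since each $S_i$ is an increasing similarity with ratio $1/\beta$, the cylinder $S_I([0,1])$ is the interval $[a_{n,j}, a_{n,j}+\beta^{-n}]$ of length $\beta^{-n}=\beta^{-n}d_0$. Writing $a_{n,j+1}-a_{n,j}=\beta^{-n}t_{n,j+1}$ and $a_{n,j+2}-a_{n,j}=\beta^{-n}(t_{n,j+1}+t_{n,j+2})$ with $t_{n,j+1},t_{n,j+2}\in\SD$ (Lemma 2.1), the left inclusion $[a_{n,j},a_{n,j+1}]\subseteq S_I([0,1])$ is equivalent to $t_{n,j+1}\le d_0$, which is immediate because $d_0=\max\SD$. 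The right inclusion $S_I([0,1])\subseteq[a_{n,j},a_{n,j+2}]$ is equivalent to
$$ t_{n,j+1}+t_{n,j+2}\ge d_0, $$
and this is the main point.

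To prove the displayed inequality I would first establish the structural claim that \emph{every pair of consecutive letters of $T_n$ contains the letter $d_1$}. This follows from Theorem 2.1: for $n\ge 2$ we have $T_n=\sigma(T_{n-1})$, and every block $\sigma(d)$ begins with $d_1$, so any pair straddling two blocks has its \emph{second} coordinate equal to $d_1$, while the only internal pairs, namely those inside $\sigma(d_0)=d_1d_0$ and $\sigma(d_i)=d_1d_{i+1}$, have their \emph{first} coordinate equal to $d_1$ (the block $\sigma(d_m)=d_1$ has no internal pair); the case $n=1$ is just $T_1=d_1d_0$. Consequently every consecutive label sum is at least $d_1+\min\SD=d_1+d_m$, so it suffices to check $d_1+d_m\ge d_0$. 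Using $d_1=\beta-1$ and $d_m=\sum_{i=2}^m\beta^{-i}$ together with the defining relation, which gives $\beta^{m+1}=2\beta^m-1$, a direct computation yields $\beta^m(d_1+d_m-d_0)=\sum_{k=1}^{m-2}\beta^k\ge 0$, with equality exactly when $m=2$. This establishes both inclusions.

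For the final assertion, note that the interior of $S_I([0,1])$ is $(a_{n,j},a_{n,j}+\beta^{-n})$. I claim $t_{n,j+1}<d_0$ strictly. Indeed, the substitution shows $d_0$ is produced only by $\sigma(d_0)=d_1d_0$ and always lands at the end, so $T_n$ contains exactly one $d_0$, occupying the last position $\#X_n-1$; equivalently, the only basic interval of rank $n$ of full length $\beta^{-n}$ is the rightmost one $[a_{n,\#X_n-2},1]$. The hypothesis $j\le\#X_n-3$ excludes this, so $t_{n,j+1}\ne d_0$ and hence $a_{n,j}<a_{n,j+1}<a_{n,j}+\beta^{-n}\le a_{n,j+2}$. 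Thus the open interval $(a_{n,j},a_{n,j}+\beta^{-n})$ contains $a_{n,j+1}$ but no point that is $\ge a_{n,j+2}$, so it meets precisely the two basic intervals $[a_{n,j},a_{n,j+1}]$ and $[a_{n,j+1},a_{n,j+2}]$.

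The routine inclusions and the endpoint bookkeeping are straightforward; the crux is the structural claim that consecutive labels always contain a $d_1$, which converts the geometric statement into the single numerical inequality $d_1+d_m\ge d_0$. I expect this structural step, together with using the ``exactly one $d_0$, at the end'' observation correctly to upgrade $t_{n,j+1}\le d_0$ to the strict inequality $t_{n,j+1}<d_0$, to be where the real care is needed.
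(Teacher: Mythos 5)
Your proof is correct and follows essentially the same route as the paper: both reduce the statement to the observation that at least one of any two consecutive labels in $T_n$ equals $d_1$ (read off from the fact that every substitution block begins with $d_1$), combined with the numerical bound $d_1+d_i\ge d_0$ for all $i$. You are in fact slightly more careful than the paper, which asserts the strict inequality $d_1+d_m>1$ (false for $m=2$, where equality holds, though the non-strict bound suffices for the inclusion), and you also supply the explicit justification, omitted in the paper, that $d_0$ occurs only as the last letter of $T_n$, which is what upgrades the conclusion to ``exactly two'' basic intervals.
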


\begin{proof}
Clearly   $d_1+d_i>d_1+d_m>1$ for every  $1\leq i\leq m-1.$
 From the definition of the substitution $\sigma$, we see that  at least one of letter of $t_{n,j} t_{n, j+1}$ is $d_1$.
 Let $d_i$ be  the label of the other interval, then
 $$|a_{n,j+2}-a_{n,j}|\geq (d_1+d_i)/\beta^n>1/\beta^n=|S_I([0,1])|.$$
 The lemma is proved.
%
\end{proof}

\section{\textbf{Measure of basic  intervals}}

  For $I=i_1\cdots i_k\in \Sigma^{*}$, we define $p_I=p_{i_1\cdots i_k}=p_{i_1}\cdots p_{i_k}$ and $S_I=S_{i_1\cdots i_k}=S_{i_1}\cdots S_{i_k}$. Iterating \eqref{eq:mu} $n$ times, we obtain
\begin{equation}\label{eq:mu_n}
\mu_\mathbf{p}=\sum_{I\in \Sigma^n} p_I\mu_\mathbf{p}\circ S_{I}^{-1}.
\end{equation}

 \begin{lemma} For $1\leq i\leq m$, we have
$$ \mu_{\bp}([0,d_i])=1-\frac{p_2^{m+1-i}}{1-p_1p_2^{m-1}}.$$
 \end{lemma}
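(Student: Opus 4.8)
The plan is to compute $\mu_{\bp}([0,d_i])$ directly from the self-similar structure, exploiting the fact that the $\beta$-expansion $d_i=0.\underbrace{1\cdots1}_{m-i}0\underbrace{1\cdots1}_{i-1}$ gives an explicit cylinder decomposition of the interval $[0,d_i]$ up to a self-similar copy of itself. The guiding idea is that each digit $1$ in the base-$\beta$ expansion of $d_i$ corresponds to applying the map $S_2$ (which shifts by $1-1/\beta=d_1/\beta=0.1^{m-1}\cdot\beta^{-0}$, i.e. the "carry'' pattern), while the digit $0$ corresponds to $S_1$; reading the address of the left endpoint of the remaining gap lets us set up a recursion.

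First I would establish a recursion in $i$. Write $q=p_2$ for brevity and recall $\mu_{\bp}=p_1\mu_{\bp}\circ S_1^{-1}+p_2\mu_{\bp}\circ S_2^{-1}$. Using the expansion of $d_i$ and the geometry from Section~2, I would show that $[0,d_i]$ decomposes as the union of a rank-one cylinder carrying a known mass together with a rescaled copy on which the measure restricts to $[0,d_{i'}]$ for a neighboring index $i'$. Concretely, the relation $\beta d_j-d_1=d_{j+1}$ together with $d_m$ having expansion $0.1^{m-1}$ suggests that peeling off the leading digit of $d_i$ relates $\mu_{\bp}([0,d_i])$ to $\mu_{\bp}([0,d_{i-1}])$ or to $\mu_{\bp}([0,d_{i+1}])$ by a factor of $p_1$ or $p_2$ plus a constant. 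I would make this precise by tracking which sub-IFS images $S_1([0,1])$ and $S_2([0,1])$ meet $[0,d_i]$ and by how much, using that these two images overlap on an interval of length $1-2(1-1/\beta)$, which is controlled by the $d_j$'s.

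Alternatively, and perhaps more cleanly, I would guess the closed form and verify it by induction on $i$, checking the base case (say $i=m$, where $[0,d_m]=[0,0.1^{m-1}]$ is covered by a single cylinder of a clean type) and then confirming that the proposed formula satisfies the one-step recursion coming from \eqref{eq:mu}. The candidate formula $1-\dfrac{p_2^{m+1-i}}{1-p_1p_2^{m-1}}$ has the telltale shape of a truncated geometric-type series with ratio related to $p_2$ and a normalizing denominator $1-p_1p_2^{m-1}$; this strongly hints that the complement $[d_i,1]$ carries mass $\frac{p_2^{m+1-i}}{1-p_1p_2^{m-1}}$, which I would interpret as the total measure of the union of all cylinders lying to the right of $d_i$. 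Since $d_i$ is a right-end cutpoint whose address under the substitution $\sigma$ is periodic, the cylinders to its right form a self-similar family whose masses sum geometrically, and the denominator $1-p_1p_2^{m-1}$ is exactly the "return'' factor of one period of that family.

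The main obstacle I anticipate is the overlap: because the IFS $\{S_1,S_2\}$ does not satisfy the open set condition for these $\beta$, the images $S_1([0,1])$ and $S_2([0,1])$ genuinely overlap, so $\mu_{\bp}$ of a cylinder is \emph{not} simply $p_I$, and one cannot naively add cylinder masses without double-counting. The crux is therefore to identify, for the particular cutpoint $d_i$, a decomposition of $[d_i,1]$ (or of $[0,d_i]$) into pieces that are either disjoint in measure or whose overlaps can be resolved using Lemma~2.3 — which tells us that each rank-$n$ cylinder meets exactly two basic intervals — so that the geometric summation closes up exactly into $1-p_1p_2^{m-1}$ in the denominator. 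Handling the endpoint overlaps correctly, and verifying that the measure of the single overlapping region telescopes into the claimed closed form rather than leaving an error term, is where the real care is needed; once the recursion is pinned down, the induction itself is routine.
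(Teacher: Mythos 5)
Your first (and main) line of attack is the paper's own: apply the exact identity \eqref{eq:mu} to the interval $[0,d_i]$. For $1\leq i\leq m-1$ one has $\beta d_i\geq 1$, so $S_1^{-1}([0,d_i])\supseteq[0,1]$ contributes $p_1$, while $S_2^{-1}([0,d_i])\cap[0,1]=[0,\beta d_i-d_1]=[0,d_{i+1}]$ contributes $p_2\,\mu_\bp([0,d_{i+1}])$; writing $c_i=\mu_\bp([0,d_i])$ this gives $1-c_i=p_2(1-c_{i+1})$, exactly the recursion you conjecture. One remark: your anticipated ``main obstacle'' about overlaps and double-counting is a red herring for this route. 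The identity \eqref{eq:mu} is an exact equality of measures valid for every Borel set, with or without the open set condition; the proof never decomposes $[0,d_i]$ or $[d_i,1]$ into cylinders and never sums cylinder masses, so there is nothing to telescope and no error term to control. Lemma~2.3 is not needed here.

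The genuine gap is at $i=m$. The recursion above supplies only $m-1$ equations in the $m$ unknowns $c_1,\dots,c_m$, so it does not determine anything until it is closed, and your proposed base case does not close it: you write $[0,d_m]=[0,0.1^{m-1}]$, but $0.1^{m-1}=d_1$ while $d_m=0.01^{m-1}$, and $[0,d_m]$ is not a cylinder whose mass can be read off directly. What actually happens at $i=m$ is that $\beta d_m=d_1$ and $\beta d_m-d_1=0$, so applying \eqref{eq:mu} once more yields $c_m=p_1\mu_\bp([0,d_1])+p_2\mu_\bp(\{0\})=p_1c_1$. This cyclic relation is the missing $m$-th equation; combining it with $1-c_1=p_2^{m-1}(1-c_m)$ is precisely what produces the denominator $1-p_1p_2^{m-1}$ that you could only motivate heuristically as a ``return factor.'' Without identifying $c_m=p_1c_1$, neither your recursion nor your guess-and-verify induction has an anchor (and for the verification route you would additionally need to note that the resulting linear system has a unique solution, which is immediate since $p_1p_2^{m-1}<1$).
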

 \begin{proof} Let us denote $c_{i}=\mu_\mathbf{p}([0,d_i])$, $1\leq i\leq m.$ By the definition of $\mu_{\bp}$, we have
 $$
 \mu_\mathbf{p}([0,d_i])=p_1\mu_\mathbf{p}([0,\beta d_i])+p_2\mu_\mathbf{p}([1-\beta, 1-\beta+\beta d_i]).
 $$
  If $1\leq i\leq m-1$, then $\beta d_i \geq 1$, so
$\mu_\mathbf{p}([0,d_i])=p_1+p_2\mu_\mathbf{p}([0,d_{i+1}])$, which implies that
\begin{equation}\label{eq:linear-1}
1-c_i=p_2(1-c_{i+1}).
\end{equation}
Applying  the same argument to $[0,d_m]$, we have
\begin{equation}\label{eq:linear-2}
c_m=\mu_\mathbf{p}([0,d_m])
=p_1\mu_\mathbf{p}([0,d_1])+
p_2\mu_\mathbf{p}(\{0\})=p_1c_1.
\end{equation}
 Solving the linear equation system   \eqref{eq:linear-1} and \eqref{eq:linear-2}, we obtain the lemma.
\end{proof}

We define a weight function $W_n(x)$ on  $X_n\setminus\{1\}$ as follows:
\begin{equation}
W_n(x)=\sum_{S_I(0)=x,\ |I|=n} p_I.
\end{equation}

  Fix $n\geq 2$, let $(x_0, x_1, x_2)=(a_{n,j},a_{n,j+1},a_{n,j+2})$, we call $(x_0, x_1, x_2)$ a \emph{triple} in $X_n$, 
and $(W_n(x_0), W_n(x_1), W_n(x_2))$ a \emph{weight triple} of rank $n$.

\begin{lemma}\label{lem:bound} Let $(x_0,x_1,x_2)$ be a triple in  $X_n$. Then there exist a constants $C'>0$ such that
\begin{equation}\label{eq:bound}
C'(W_n(x_0)+W_n(x_1))\leq\mu_\mathbf{p}([x_1, x_2])\leq W_n(x_0)+W_n(x_1).
\end{equation}
\end{lemma}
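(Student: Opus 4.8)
The plan is to expand $\mu_{\bp}([x_1,x_2])$ using the exact self-similar identity \eqref{eq:mu_n},
$$\mu_{\bp}([x_1,x_2])=\sum_{I\in\Sigma^n}p_I\,\mu_{\bp}\big(S_I^{-1}([x_1,x_2])\big),$$
and then to show that only the words $I$ with $S_I(0)\in\{x_0,x_1\}$ contribute. Since $\mu_{\bp}$ is continuous (non-atomic) — a standard argument from \eqref{eq:mu} shows no single point can carry positive mass — a word $I$ contributes only if $S_I([0,1])$ meets the interior $(x_1,x_2)$. By Lemma 2.2 every cylinder $S_I([0,1])$ with left endpoint $a_{n,k}$ satisfies $S_I([0,1])\subseteq[a_{n,k},a_{n,k+2}]$; hence cylinders rooted at $a_{n,k}$ with $k\le j-1$ stay to the left of $x_1=a_{n,j+1}$, and those rooted at $a_{n,k}$ with $k\ge j+2$ stay to the right of $x_2$. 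This leaves exactly the cylinders rooted at $x_0=a_{n,j}$ and at $x_1=a_{n,j+1}$, the latter covering $[x_1,x_2]$ entirely.

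Next I would evaluate the two surviving contributions, using that $S_I^{-1}([x_1,x_2])$ depends only on the triple, not on the individual word $I$, because each $S_I$ is an increasing similarity of ratio $\beta^{-n}$. Writing $d_{i'}=t_{n,j+1}$ and $d_i=t_{n,j+2}$ for the labels of $[x_0,x_1]$ and $[x_1,x_2]$: a cylinder rooted at $x_1$ gives $S_I^{-1}([x_1,x_2])=[0,d_i]$, hence contributes $\mu_{\bp}([0,d_i])=:c_i$ (evaluated in Lemma 3.1); a cylinder rooted at $x_0$ gives $S_I^{-1}([x_1,x_2])=[d_{i'},\,d_{i'}+d_i]\cap[0,1]$, hence contributes $\rho:=\mu_{\bp}\big([d_{i'},\min(d_{i'}+d_i,1)]\big)$. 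Summing $p_I$ over all words sharing a common root converts these sums into the weights, yielding the exact formula
$$\mu_{\bp}([x_1,x_2])=\rho\,W_n(x_0)+c_i\,W_n(x_1).$$

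Both inequalities then drop out of this identity. The upper bound is immediate since $\rho,c_i\le\mu_{\bp}([0,1])=1$. For the lower bound I need $\rho$ and $c_i$ bounded below by a positive constant uniform in $n$ and in the triple. As $\mathrm{supp}(\mu_{\bp})=[0,1]$, the measure is strictly positive on every non-degenerate subinterval, so it suffices that $[0,d_i]$ and $[d_{i'},\min(d_{i'}+d_i,1)]$ are non-degenerate; by Lemma 2.2 at least one of $d_{i'},d_i$ equals $d_1$, and since $d_i>0$ and $d_{i'}<1$ one gets $d_{i'}<\min(d_{i'}+d_i,1)$ in every case, so both intervals have positive length. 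Because the pair $(d_{i'},d_i)$ ranges over a finite subset of $\SD\times\SD$, the quantities $\rho$ and $c_i$ take only finitely many positive values, and taking $C'$ to be their minimum completes the argument.

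The main obstacle I anticipate is the bookkeeping at the right end of $[0,1]$. From the substitution $\sigma$ the letter $d_0$ is produced solely by $d_0\mapsto d_1d_0$, so it occurs in $T_n$ only as the final letter; thus the case $x_2=1$ must be handled separately, since Lemma 2.2 does not directly apply to the cylinder rooted at $x_1$ there. In that boundary triple the cylinder rooted at $x_1=1-\beta^{-n}$ is exactly $[x_1,1]$ (so $c_i=1$) and the label of $[x_0,x_1]$ is forced to be $d_1$, keeping $\rho=1-c_1>0$; I would check directly that the non-degeneracy of $[d_{i'},\min(d_{i'}+d_i,1)]$ persists. The only other delicate point is the non-atomicity of $\mu_{\bp}$, which is what guarantees that cylinders meeting $[x_1,x_2]$ only at an endpoint contribute nothing to the expansion.
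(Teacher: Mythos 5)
Your proposal is correct and follows essentially the same route as the paper: expand $\mu_{\bp}([x_1,x_2])$ via \eqref{eq:mu_n}, observe that only cylinders rooted at $x_0$ and $x_1$ contribute, obtain the exact identity $\mu_{\bp}([x_1,x_2])=\mu_{\bp}([d_{s},1])\,W_n(x_0)+\mu_{\bp}([0,d_t])\,W_n(x_1)$, and take $C'$ as a minimum over the finitely many resulting positive coefficients. The only difference is that you spell out details the paper leaves implicit (non-atomicity to discard cylinders meeting $[x_1,x_2]$ in a single point, and the boundary triple with $x_2=1$), which strengthens rather than changes the argument.
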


\begin{proof} Suppose $x_1-x_0=d_s/\beta^{n}$ and $x_2-x_1=d_t/\beta^n$   for some $0\leq s,t\leq m$. By \eqref{eq:mu_n}, we have

\begin{equation*}
\begin{aligned}
 \mu_\mathbf{p}([x_1, x_2]) &=\sum_{|I|=n} p_I  \mu_\bp(S_I^{-1}([x_1,x_2]))\\
&= \sum_{|I|=n, S_I(0)=x_0}
 p_{I}  \mu_\bp(S_I^{-1}([x_1,x_2]))+
  \sum_{|I|=n, S_I(0)=x_1}
  p_{I} \mu_\bp( S_I^{-1}([x_1,x_2]))\\
 \end{aligned}
\end{equation*}

 If $S_I(0)=x_0$,  then $ S_{I}^{-1}([x_1,x_2])\cap [0,1]=  [d_s,1]$;
if $S_I(0)=x_1$, then $ S_{I}^{-1}([x_1,x_2])\cap [0,1]=  [0, d_t]$.
Therefore,
$$\mu_\mathbf{p}([x_1, x_2])=W_n(x_0) (1-\mu_\bp([0,d_s]) +
  W_n(x_1) \mu_\bp([0, d_t]).
  $$
Set $C'=\min\{\mu_\bp([0, d_m]),1-\mu_\bp([0, d_t])\},$ we obtain \eqref{eq:bound}. The lemma is proved.
\end{proof}

\section{\textbf{Proof of the Theorem \ref{thm:main}}}

We use all the notations in the previous sections.  Let $B(z,r)$ be the ball of radius $r$ and with center  $z$.

\begin{lemma}\label{lem:interval}
The measure $\mu_\mathbf{p}$ is doubling on $[0,1]$ if and only if there exist constants $C_1'>0$ such that, for any $n\geq2$, $0\leq j\leq \#X_n-3$,
\begin{equation}\label{bound2}
(C_1')^{-1}\leq \frac{\mu_\mathbf{p}([a_{n,j},a_{n,j+1}])}{
\mu_\mathbf{p}([a_{n,j+1},a_{n,j+2}])}\leq C_1'.
\end{equation}
\end{lemma}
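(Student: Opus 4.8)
The plan is to prove both implications by comparing balls with the basic intervals of a suitably chosen rank, exploiting the fact (immediate from Lemma 2.1 together with $d_m=\min\mathcal{D}$ and $d_0=1=\max\mathcal{D}$) that every basic interval of rank $n$ has length in $[d_m\beta^{-n},\beta^{-n}]$; in particular the lengths of any two basic intervals of the same rank differ by at most the factor $1/d_m$.

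For the forward implication (doubling $\Rightarrow$ \eqref{bound2}), fix a triple $(x_0,x_1,x_2)=(a_{n,j},a_{n,j+1},a_{n,j+2})$ and set $\ell=x_1-x_0$, $r=x_2-x_1$, so that $\ell,r\in[d_m\beta^{-n},\beta^{-n}]$. The difficulty is that a ball centred at the shared endpoint $x_1$ lies in neither interval; I would instead centre at midpoints. If $y_0$ is the midpoint of $[x_0,x_1]$, then $B(y_0,\ell/2)=[x_0,x_1]$, while a short computation gives $[x_1,x_2]\subseteq B(y_0,\ell/2+r)$, and the two radii satisfy $\frac{\ell/2+r}{\ell/2}=1+\frac{2r}{\ell}\le 1+\frac{2}{d_m}$. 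Iterating the doubling inequality a bounded number of times (depending only on $d_m$ and the doubling constant) yields $\mu_\bp([x_1,x_2])\le C_1'\,\mu_\bp([x_0,x_1])$; centring instead at the midpoint of $[x_1,x_2]$ gives the reverse bound with the same constant. Hence \eqref{bound2} holds with a constant independent of $n$ and $j$.

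For the converse (\eqref{bound2} $\Rightarrow$ doubling), I would first note that, chaining \eqref{bound2} along consecutive intervals, any two basic intervals of the same rank separated by at most $k$ steps have measures comparable up to the factor $(C_1')^{k}$. Given $x\in[0,1]$ and small $r>0$, choose $n$ with $\beta^{-(n+1)}\le 2r<\beta^{-n}$ and pass to the finer rank $n'=n+c$, where $c$ depends only on $\beta$ and is fixed so that $\beta^{-(n+1)}\ge 2\beta^{-n'}$ (possible since $\beta<2$). Two facts then hold at once: as $2r\ge\beta^{-(n+1)}\ge 2\beta^{-n'}$ exceeds twice the maximal length of a rank-$n'$ interval, $B(x,r)$ contains a full rank-$n'$ basic interval $I^{*}$; and as $B(x,2r)$ has length $4r<2\beta^{-n}=2\beta^{c}\beta^{-n'}$, it meets at most $N:=2\beta^{c}/d_m+2$ consecutive rank-$n'$ intervals. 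These $N$ intervals (including $I^{*}$) form one block, so their measures agree up to $(C_1')^{N}$, giving
\begin{equation*}
\mu_\bp(B(x,2r))\le\sum_{I\cap B(x,2r)\neq\emptyset}\mu_\bp(I)\le N(C_1')^{N}\mu_\bp(I^{*})\le N(C_1')^{N}\mu_\bp(B(x,r)),
\end{equation*}
which is the doubling inequality; the finitely many large scales are handled directly, since $\mu_\bp$ assigns positive, uniformly bounded-below mass to any fixed-rank basic interval.

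The step I expect to be the main obstacle is the lower bound $\mu_\bp(B(x,r))\ge\mu_\bp(I^{*})$ in the converse: a ball of radius $r$ need not contain any basic interval of the rank naturally matched to $r$, because a single basic interval can be almost as long as $2r$. The resolution—refining to rank $n'=n+c$ so that $B(x,r)$ provably swallows one whole interval while $B(x,2r)$ still meets only boundedly many—is precisely what makes the upper and lower estimates interlock, and it uses both the two-sided length bound $d_m\beta^{-n}\le|I|\le\beta^{-n}$ and the inequality $\beta<2$.
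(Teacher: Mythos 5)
Your proposal is correct and follows essentially the same strategy as the paper's proof: for one direction you trap a neighboring basic interval inside a ball obtained by enlarging, a bounded number of doublings, a ball that is exactly (or essentially) one basic interval; for the other you sandwich $B(z,r)$ and $B(z,2r)$ between one basic interval and a bounded block of consecutive basic intervals of a matched rank and chain \eqref{bound2} across the block. The only cosmetic differences are that you argue the forward implication directly from midpoint-centered balls rather than by contradiction, and you insert an extra refinement of rank ($n'=n+c$) where the paper simply chooses the rank $s$ with $\beta^{-s}\le r$ so that $B(z,r)$ already swallows the basic interval containing $z$.
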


\begin{proof}
\indent The 'if' part.
Let $0<r<1$ and $z\in [0,1]$. Let $s$ be the  positive integer such that $1/\beta^{s}\leq r< 1/\beta^{s-1}$.
Let $j$ be the integer such that $z\in [a_{s,j},a_{s,j+1}]$.
On one hand, $B(z,r)\supset [a_{s,j},a_{s,j+1}]$ since $r\geq 1/\beta^s$, so
  $$\mu_\mathbf{p}(B(z,r))\geq\mu_\mathbf{p}([a_{s,j},a_{s,j+1}]).$$
   On the other hand,  $a_{s,k+1}-a_{s,k}=d_i/\beta^s\geq d_m/\beta^s=(\beta-1)/\beta^{s+1}$. Since $r<1/\beta^{s-1}$, then
   $$\frac{r}{(\beta-1)/\beta^{s+1}}<\frac{1/\beta^{s-1}}{(\beta-1)/\beta^{s+1}}=\frac{\beta^2}{\beta-1}\leq \beta^3<8.$$
 Therefore $B(z,2r)$ is covered by at most $16$ consecutive
  basic intervals of rank $s$.
    That is, $B(z,2r)\subset \bigcup_{k=j-8}^{j+8}[a_{s,k},a_{s,k+1}]$. By (\ref{bound2}), set $C_0=2\sum_{n=0}^8 (C_1')^n$,  we obtain that
    $$\mu_\mathbf{p}(B(z,2r))\leq C_0\mu_\mathbf{p}([a_{s,j},a_{s,j+1}])\leq C_0\mu_\mathbf{p}(B(z,r)),$$
    which confirms that  $\mu_\mathbf{p}$ is doubling.

\indent The 'only if' part. Suppose $\mu_\mathbf{p}$ is doubling, then there exists $C_0>0$ such that $\mu_\mathbf{p}(B(z,2r))\leq C_0\mu_\mathbf{p}(B(z,r))<\infty$ for any $z\in [0,1],r>0$.

Suppose on the contrary the result is false. Then  for any $M>0$, there exist $s\geq 2$ and $0\leq j\leq \#X_n-3$ such that
$$
\frac{\mu_\mathbf{p}([a_{s,j},a_{s,j+1}])}
{\mu_\mathbf{p}([a_{s,j+1},a_{s,j+2}])}>M$$
(or the other round).
 Choose $z$ and $r$ such that
  $\overline{B}(z,r)=[a_{s,j+1},a_{s,j+2}]$,   then $r=\frac{|a_{s,j+2}-a_{s,j+1}|}{2}\geq \frac{1}{2}\frac{d_m}{\beta^s}$. Therefore $|a_{s,j+1}-a_{s,j}|\leq \frac{d_0}{\beta^s}=\frac{1}{\beta^s}\leq \frac{2r}{d_m}$. Set $p=\lfloor 2/d_m\rfloor +1$, then
  $ [a_{s,j},a_{s,j+1}]\subset B(z, pr)$, so
$$\mu_\mathbf{p}(B(z,pr))\geq M \mu_\mathbf{p}(B(z,r)).$$
On the other hand, we have
$\mu_\mathbf{p}(B(z,pr))\leq \mu_\mathbf{p}(B(z,2^pr))\leq  C_0^p \mu_\mathbf{p} (B(z,r))$.
By choosing $M>C_0^p$, we obtain  a contradiction.
\end{proof}

\begin{lemma}\label{lem:balance-1} The measure $\mu_\mathbf{p}$  is doubling on $[0,1]$ if and only if  there exist constants $C_1,C_2>0$  such that for any $n\geq2$, $0\leq j\leq \#X_n-4$,
\begin{equation}\label{bound1}
C_1\leq \frac{W_n(a_{n,j})+W_n(a_{n,j+1})}
{W_n(a_{n,j+1})+W_n(a_{n,j+2})}
\leq C_2.
\end{equation}
\end{lemma}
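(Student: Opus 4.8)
The plan is to read off Lemma~\ref{lem:balance-1} from the already-proven Lemma~\ref{lem:interval} by converting the statement about consecutive \emph{interval measures} into one about consecutive \emph{weight pair sums}, the bridge being the two-sided estimate of Lemma~\ref{lem:bound}. Fix $n\geq 2$ and an index $j$ with $1\leq j\leq \#X_n-3$. I would apply Lemma~\ref{lem:bound} twice: to the triple $(a_{n,j-1},a_{n,j},a_{n,j+1})$, giving $\mu_\bp([a_{n,j},a_{n,j+1}])\asymp W_n(a_{n,j-1})+W_n(a_{n,j})$, and to the triple $(a_{n,j},a_{n,j+1},a_{n,j+2})$, giving $\mu_\bp([a_{n,j+1},a_{n,j+2}])\asymp W_n(a_{n,j})+W_n(a_{n,j+1})$, where the comparison constants are those of Lemma~\ref{lem:bound} and in particular are independent of $n$ and $j$. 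Taking the quotient, the ratio in \eqref{bound2} is comparable, with uniform constants, to
$$\frac{W_n(a_{n,j-1})+W_n(a_{n,j})}{W_n(a_{n,j})+W_n(a_{n,j+1})},$$
which after the reindexing $i=j-1$ is exactly the ratio in \eqref{bound1} with $0\leq i\leq \#X_n-4$. Thus \eqref{bound2} restricted to $j\geq 1$ is equivalent, with uniform constants, to \eqref{bound1}.

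The one index this reindexing misses is $j=0$, i.e.\ the leftmost interval $[a_{n,0},a_{n,1}]=[0,d_1/\beta^n]$, where Lemma~\ref{lem:bound} does not apply because there is no basic interval to its left. I would handle it by direct computation. Since $S_I(0)=(1-1/\beta)\sum_k [i_k=2]\beta^{-(k-1)}=0$ forces $I=1^n$, the only cylinder meeting $[0,d_1/\beta^n]$ is $S_{1^n}([0,1])$, whose portion inside gives $\mu_\bp([a_{n,0},a_{n,1}])=p_1^{n}\mu_\bp([0,d_1])$ and also $W_n(a_{n,0})=p_1^n$. The equation $S_I(0)=a_{n,1}=d_1/\beta^n$ reduces to writing $\beta^{-(n-1)}$ as $\sum_k [i_k=2]\beta^{-(k-1)}$ with $0\le k-1\le n-1$; as $\beta^{-(n-1)}$ is the smallest available power, the only solution is $i_n=2$ and $i_k=1$ otherwise, so $W_n(a_{n,1})=p_1^{n-1}p_2$. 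Applying the two-sided estimate of Lemma~\ref{lem:bound} to $(a_{n,0},a_{n,1},a_{n,2})$ yields $\mu_\bp([a_{n,1},a_{n,2}])\asymp W_n(a_{n,0})+W_n(a_{n,1})=p_1^{n-1}$, so the boundary ratio $\mu_\bp([a_{n,0},a_{n,1}])/\mu_\bp([a_{n,1},a_{n,2}])$ lies in a fixed range (between $p_1\mu_\bp([0,d_1])$ and $p_1\mu_\bp([0,d_1])/C'$) independent of $n$. Hence \eqref{bound2} holds automatically at $j=0$ for every $\bp$.

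Assembling these, the equivalence follows. If $\mu_\bp$ is doubling then, by Lemma~\ref{lem:interval}, \eqref{bound2} holds for all $0\le j\le \#X_n-3$, in particular for $j\geq 1$, which the first paragraph converts into \eqref{bound1}. Conversely, if \eqref{bound1} holds, the first paragraph gives \eqref{bound2} for $1\le j\le \#X_n-3$, and the second paragraph supplies it at $j=0$; so \eqref{bound2} holds on the full range and Lemma~\ref{lem:interval} gives doubling.

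The interior bookkeeping is routine once Lemma~\ref{lem:bound} is in hand, so I expect the genuine obstacle to be the boundary index $j=0$: one must notice that $i=j-1$ leaves it uncovered and that Lemma~\ref{lem:bound} cannot be invoked there, then check by hand that the leftmost ratio is a harmless bounded constant, the key facts being the uniqueness of the representations of $0$ and of $a_{n,1}$, which pin down $W_n(a_{n,0})=p_1^n$ and $W_n(a_{n,1})=p_1^{n-1}p_2$. A secondary point requiring care is to keep every comparison constant independent of $n$ and $j$ (which is legitimate since the labels entering the lower bound of Lemma~\ref{lem:bound} never equal $d_0$ for a genuine triple), so that ``bounded ratios for all $n,j$'' truly passes between the two formulations.
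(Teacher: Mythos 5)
Your proposal is correct and follows the same route as the paper: both deduce the lemma from Lemma \ref{lem:interval} by using the two-sided estimate of Lemma \ref{lem:bound} to convert ratios of consecutive interval measures into ratios of consecutive weight sums, with constants $C_1=C'(C_1')^{-1}$, $C_2=(C')^{-1}C_1'$. You are in fact more careful than the paper, whose one-line proof silently ignores the index shift that leaves the leftmost ratio ($j=0$ in \eqref{bound2}) uncovered by \eqref{bound1}; your direct computation $W_n(a_{n,0})=p_1^n$, $W_n(a_{n,1})=p_1^{n-1}p_2$ and $\mu_\bp([0,d_1/\beta^n])=p_1^n\mu_\bp([0,d_1])$ correctly shows that this boundary ratio is automatically bounded independently of $n$, so the equivalence holds on the full range.
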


\begin{proof}
Let $C_1=C'(C_1')^{-1}$ and $C_2=(C')^{-1}C_1'$, by \eqref{eq:bound}, we see that \eqref{bound2}  holds if and only if  (\ref{bound1}) holds. The lemma is proved.
\end{proof}

\begin{lemma}\label{lem:matrix}
Let $(x_0,x_1,x_2)$ be a triple in $X_n$,   and $w_1,w_2$ be the labels of $x_0$ and $x_1$ respectively. Let $(x_0, z_1, z_2, z_3)$
be four consecutive points in $X_{n+1}$. If $w_1\neq d_m$ and $w_2\neq d_m$, then
\begin{equation}\label{matrix1}
    \begin{bmatrix}
        W_{n+1}(z_{1})  \\
        W_{n+1}(z_{2})  \\
        W_{n+1}(z_{3})
    \end{bmatrix}
= M_1
    \begin{bmatrix}
        W_{n}(x_{0})  \\
        W_{n}(x_{1})  \\
        W_{n}(x_{2})
    \end{bmatrix}
\text{ where }
    M_1=\begin{bmatrix}
        p_2 & 0 & 0 \\
        0 & p_1 & 0 \\
        0 & p_2 & 0
    \end{bmatrix}.
\end{equation}
If $w_1w_2=d_md_1$, then \begin{equation}\label{matrix2}
    \begin{bmatrix}
        W_{n+1}(z_{1})  \\
        W_{n+1}(z_{2})  \\
        W_{n+1}(z_{3})
    \end{bmatrix}
= M_2
    \begin{bmatrix}
        W_{n}(x_{0})  \\
        W_{n}(x_{1})  \\
        W_{n}(x_{2})
    \end{bmatrix}
\text{ where }
    M_2=\begin{bmatrix}
        p_2 & p_1 & 0 \\
        0 & p_2 & 0 \\
        0 & 0 & p_1
    \end{bmatrix}.
    \end{equation}
\end{lemma}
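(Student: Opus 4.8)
The plan is to derive a single refinement recursion for the weight function $W_n$, and then read off both matrices by inspecting, case by case, the local geometry of $X_{n+1}$ above $X_n$, which is governed by the substitution $\sigma$.

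First I would set up the recursion. Writing a word of length $n+1$ as $J\ast i$ with $|J|=n$ and $i\in\Sigma$, and using $S_{J\ast 1}(0)=S_J(0)$ together with $S_{J\ast 2}(0)=S_J(0)+d_1/\beta^{n+1}$ (since $S_2(0)=1-1/\beta=d_1/\beta$), one obtains, with the convention $W_n(z)=0$ whenever $z\notin\{S_J(0):|J|=n\}$,
\[
W_{n+1}(y)=p_1 W_n(y)+p_2\,W_n\!\Big(y-\frac{d_1}{\beta^{n+1}}\Big).
\]
This is the only genuine computation; everything else is combinatorial bookkeeping about where the shifted argument $y-d_1/\beta^{n+1}$ lands.

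Next I would locate the four consecutive points $(x_0,z_1,z_2,z_3)$. By the label convention $x_1-x_0=w_1/\beta^n$ and $x_2-x_1=w_2/\beta^n$, and by $\sigma$ a new point of $X_{n+1}$ is inserted in $[x_0,x_1]$, at $x_0+d_1/\beta^{n+1}$, exactly when $w_1\neq d_m$ (the rules $d_0\mapsto d_1d_0$ and $d_i\mapsto d_1d_{i+1}$ split the interval, whereas $d_m\mapsto d_1$ does not); the same holds for $[x_1,x_2]$. Hence when $w_1\neq d_m$ and $w_2\neq d_m$ the four points are
\[
x_0<\;x_0+\tfrac{d_1}{\beta^{n+1}}\;<x_1<\;x_1+\tfrac{d_1}{\beta^{n+1}},
\]
so $(z_1,z_2,z_3)=(x_0+d_1/\beta^{n+1},\,x_1,\,x_1+d_1/\beta^{n+1})$; when $w_1w_2=d_md_1$ the interval $[x_0,x_1]$ receives no new point, so $(z_1,z_2,z_3)=(x_1,\,x_1+d_1/\beta^{n+1},\,x_2)$. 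Evaluating the recursion at each $z_k$, the decisive issue is whether the shifted argument $z_k-d_1/\beta^{n+1}$ lies in $X_n$ (nonzero weight) or is a genuinely new level-$(n+1)$ point (zero weight). Because consecutive points of $X_n$ are at distance at least $d_m/\beta^n=d_1/\beta^{n+1}$, the shift hits a point of $X_n$ precisely when the gap immediately to the left of $z_k$ has label $d_m$. In the first case this never happens for the relevant $z_k$, so only the diagonal-type terms survive, giving $W_{n+1}(z_1)=p_2W_n(x_0)$, $W_{n+1}(z_2)=p_1W_n(x_1)$, $W_{n+1}(z_3)=p_2W_n(x_1)$, which is $M_1$. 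In the second case $w_1=d_m$ forces $x_1-d_1/\beta^{n+1}=x_0$, so evaluating at $z_1=x_1$ produces the extra term $p_2W_n(x_0)$, exactly the off-diagonal entry of $M_2$, while the other two rows are computed as before.

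The step I expect to be the main obstacle is precisely this zero/nonzero bookkeeping for $W_n$ at the shifted arguments, since it is what separates $M_1$ from $M_2$. One must verify that the backward shift by $d_1/\beta^{n+1}$ lands on a point of $X_n$ only when the left gap carries the minimal label $d_m$; this relies on the strict ordering $d_0>d_1>\cdots>d_m$ from Section 2, which guarantees that every other gap strictly exceeds $d_1/\beta^{n+1}$, so that the shift stays in the open interval and contributes nothing.
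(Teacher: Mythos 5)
Your proposal is correct and takes essentially the same route as the paper: the refinement recursion $W_{n+1}(y)=p_1W_n(y)+p_2W_n(y-d_1/\beta^{n+1})$ is exactly the paper's case analysis of which words $J\in\Sigma^{n+1}$ (ending in $1$ or in $2$) satisfy $S_J(0)=z_k$, and your identification of the points $z_1,z_2,z_3$ and of when the shifted argument lies in $X_n$ matches the paper's computation in both cases. The only cosmetic difference is that you state the recursion once in general form and then do the zero/nonzero bookkeeping, rather than enumerating the contributing words at each $z_k$ separately.
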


\begin{proof}
 Suppose  $w_1\neq d_m$ and $w_2\neq d_m$, then
 $$z_1=x_0+\beta^{-(n+1)}\cdot d_1,  \ z_2=x_1,  \ z_3=x_1+\beta^{-(n+1)}\cdot d_1.$$
 So $z_1$ is a new partition point, or $z_1\not\in X_n$.
 Moreover, if $x_0=S_I(0)$ for some $I\in \Sigma ^{n}$, then $z_1=S_{I*2}(0)$, and for any other $J\in \Sigma^{n+1}$,
 $S_J(0)\neq z_1$. Therefore  we have
$$
W_{n+1}(z_1)=
p_2\sum_{S_I(0)=x_0,\ |I|=n} p_I =p_2W_{n}(x_0).
$$
  Similarly, we have $W_{n+1}(z_2)=p_1W_{n}(x_1)$, $W_{n+1}(z_3)=p_2W_{n}(x_1)$. These three equations imply \eqref{matrix1}.

 Suppose $w_1w_2= d_md_1$. Then
 $$z_1=x_1=x_0+\beta^{-(n+1)}\cdot d_1, \  z_2=x_1+\beta^{-(n+1)}\cdot d_1,  \  z_3=x_2.$$
 Notice that $z_1=S_J(0)$ for $J\in \Sigma^{n+1}$ if  and only if
 either $J=I*2$ where $S_I(0)=x_0$ or $J=I'*1$ where $S_{I'}(0)=x_1$.
 It follows that $W_{n+1}(z_1)=p_2W_{n}(x_0)+p_1W_n(x_1)$.
 Clearly, $W_{n+1}(z_2)=p_2W_{n}(x_1)$, $W_{n+1}(z_3)=p_1W_{n}(x_2)$.
 These three equations imply  \eqref{matrix2}.
\end{proof}


\begin{proof}[\textbf{Proof of Theorem \ref{thm:main}}] Let $\bp=(p_1,p_2)$. Set $\bp'=(p_2,p_1)$. Then it is easy to show that
$$\mu_{\bp}(A)=\mu_{\bp'}(1-A).$$
 It follows that  $\mu_\bp$ is doubling if and only if $\mu_{\bp'}$
is doubling. Therefore,   we may assume that $p_1\leq p_2$ without loss of generality.

   Set $I_n=112^{n-2}$ for $n\geq 2$. Let $z_{n,1}=S_{I_n}(0)$ and $(z_{n,1},z_{n,2},z_{n,3})$ be the triple in $X_n$ initialled by $z_{n,1}$.  It is seen that when $n=2$, we have
   $$
   (z_{2,1},z_{2,2},z_{2,3})=\frac{\left(0,d_1, d_1+d_2 \right)}{\beta^2}, \quad t(z_{2,1})t(z_{2,2})t(z_{2,3})=d_1d_2d_1,
   $$
    and  the associated weight triple  is $(p_1^2,p_1p_2,p_1p_2)$.

Now let $n\geq 3$. We study the word $t(z_{n,1})t(z_{n,2})t(z_{n,3})$. It is obvious that
\begin{equation}\label{eq:skip}
 z_{n,1}=z_{n-1,1}+d_1/\beta^{n}.
 \end{equation}
Write
 $$\sigma(t(z_{n-1,1})t(z_{n-1,2})t(z_{n-1,3}) )=u_1\dots u_k.$$
    By \eqref{eq:skip}, we see that $z_{n-1,1}$ and $z_{n,1}$ are two consecutive points in $X_n$, therefore
    $$t(z_{n,1})t(z_{n,2})t(z_{n,3})=u_2u_3u_4.$$

Let $n'$ be the integer in $\{2,\dots, m+1\}$ such that $n'\equiv n \pmod m$.  By the above argument,  we obtain that
\begin{equation}\label{eq:word}
t(z_{n,1})t(z_{n,2})t(z_{n,3})
=\left\{
  \begin{aligned}
  d_1d_2d_1&,\ \text{if} \ n'= 2,\\
    d_2d_1d_3&,\ \text{if} \  n'= 3, \\
d_{n'-1}d_1d_2&,\ \text{otherwise.}
  \end{aligned}
  \right..
\end{equation}
Figure 1 illustrates the above construction in the case $m=3$.
\begin{figure}[H]
  \centering
    \includegraphics[width=10 cm]{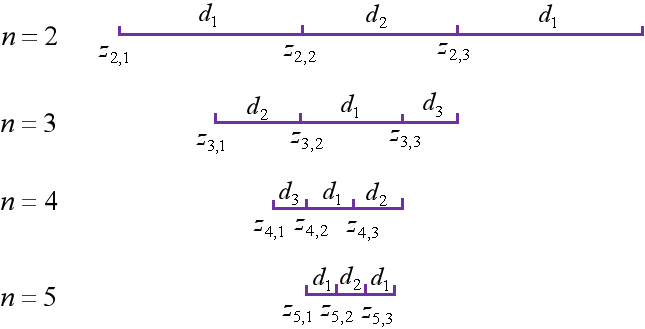}
    \caption{The case of $m=3$}
\end{figure}

By  \eqref{eq:word} and Lemma \ref{lem:matrix}, we have
\begin{equation*}
    \begin{bmatrix}
        W_{n+1}(z_{n+1,1})  \\
        W_{n+1}(z_{n+1,2})  \\
        W_{n+1}(z_{n+1,3})
    \end{bmatrix}
= Q_n
    \begin{bmatrix}
        W_{n}(z_{n,1})  \\
        W_{n}(z_{n,2})  \\
        W_{n}(z_{n,3})
    \end{bmatrix}
\end{equation*}
where $Q_n=M_1$ if $m\nmid (n-1)$ and $Q_n=M_2$ if $m|(n-1)$.
It follows that
\begin{equation}
    \begin{bmatrix}
        W_{km+2}(z_{km+2,1})  \\
        W_{km+2}(z_{km+2,2})  \\
        W_{km+2}(z_{km+2,3})
    \end{bmatrix}
 = (M_2M_1^{m-1})^k
    \begin{bmatrix}
        W_{2}(z_{2,1})  \\
        W_{2}(z_{2,2})  \\
        W_{2}(z_{2,3})
    \end{bmatrix}
    \end{equation}
    An easy calculation shows that
    \begin{equation}
    \left(M_2M_1^{m-1}\right)^k=
  \begin{bmatrix}
        p_2^{m} & p_1^m & 0 \\
        0 & p_1^{m-1}p_2 & 0 \\
        0 & p_1^{m-1}p_2 & 0
    \end{bmatrix}^k
    =
    \begin{bmatrix}
        p_2^{km} & \sum_{i=0}^{k-1}p_1^{m+(m-1)i}
        p_2^{km-m-(m-1)i} & 0 \\
        0 & p_1^{km-k}p_2^{k} & 0 \\
        0 & p_1^{km-k}p_2^{k} & 0
    \end{bmatrix},
\end{equation}
 so the weight triple of  $(z_{km+2,1},z_{km+2,2},z_{km+2,3})$  is
$$
 \left(p_1^2p_2^{km}+p_1^{m+1}p_2^{km-m+1}
 \left(\sum_{i=0}^{k-1} ({p_1}/{p_2})^{(m-1)i}
 \right), p_1^{km-k+1}p_2^{k+1}, p_1^{km-k+1}p_2^{k+1}
\right).$$

Denote  $R_k=\frac{W_{km+2}(z_{km+2,1})+W_{km+2}(z_{km+2,2})}{W_{km+2}(z_{km+2,2})+W_{km+2}(z_{km+2,3})}$, then after simplification, we have
\begin{equation}\label{eq:Rk}
R_{k}=\frac{1}
{2({p_1}/{p_2})^{k(m-1)-1}}+
\frac{\Sigma_{i=0}^{k-1} ({p_1}/{p_2})^{(m-1)i}}
{2({p_1}/{p_2})^{k(m-1)-m}}
+\frac{1}{2}.
\end{equation}
 If $p_1=p_2=1/2$,  then  the second term tends to $\infty$ as $k\to \infty$;  if $p_1<p_2$, then  the first term   tends to $+\infty$ as $k\to\infty$. So we always have $\lim_{k\to +\infty}R_k=+\infty.$
Therefore, by Lemma \ref{lem:balance-1} , $\mu_\mathbf{p}$ is not doubling.
\end{proof}

\section{\textbf{An alternative proof of  Proposition \ref{prop:Yung}}}
\indent  If $m=2$, then $\beta$ is the golden  number; in this case,
Yung\cite{Yung07}
  proved that $\mu_\mathbf{p}$ is doubling on $[0,1]$ if and only if $p_1=p_2=\frac{1}{2}$.
In this section,  we  give an alternative  proof of this result.

First, we have that $\mathcal{D}=\{d_0=1, d_1=1/\beta, d_2=1/\beta^2\}$.    But here after we use the
alphabet $\{a,b,D\}$ instead of $\{d_1,d_2, d_0\}$ for simplicity.
In other words, let $(x_0, x_1)$ be two consecutive points in $X_n$, we define $t(x_0)=a$ if  $x_1-x_0=d_1/\beta^n$
and $t(x_0)=b$ if $x_1-x_0=d_2/\beta^n$. Let $\sigma$ be the substitution
$$\sigma: D\mapsto aD, a\mapsto ab, b\mapsto a.$$
Then the label sequence associated with $X_n$ is $\sigma^n(D)$.

Next, we change the label of the first basic interval in $X_n$ from $a$ to $a_0$,  and
we modify the substitution to
$$
\sigma: \left \{
\begin{array}{l}
a_0\mapsto a_0b\\
a\mapsto ab\\
b\mapsto a\\
D\mapsto aD.
\end{array}
\right .
$$
Then the label sequence of $X_n$ is $\sigma^{n-1}(a_0D)$.
For example, the label sequence of $X_2$
is $a_0baD$.  Let $(x_0, x_1, x_2)$ be a triple in $X_n$. Then
  $$t(x_0)t (x_1)t(x_2)\in \{a_0ba, aba, baa, aab, bab, baD\}.$$

 \subsection{Offsprings and paths}
Now we define offsprings of a triple in $X_n$  as following.

 Let $(x_0, x_1, x_2)$ be a triple in $X_n$.
Let $(z_0=x_0, z_1,z_2,z_3,z_4)$ be five consecutive
 points in $X_{n+1}$. We define the set of offsprings of  $(x_0, x_1, x_2)$ to be
 $$
 \left \{
 \begin{array}{ll}
 \{(z_0, z_1, z_2), (z_1, z_2, z_3), (z_2,z_3, z_4)\}, &\text{ if } t(x_0)t (x_1)t(x_2)=a_0ba,\\
 \{(z_1, z_2, z_3), (z_2,z_3, z_4)\}, &\text{ if } t(x_0)t (x_1)t(x_2)\in\{aba,aab, baD\},\\
  \{  (z_1, z_2, z_3) )\}, &\text{ if } t(x_0)t (x_1)t(x_2)\in\{baa,bab\}.
 \end{array}
 \right .
 $$
It is easy to verify that

  (i) If a triple has more than one offsprings, then the label of the offsprings are all different;

  (ii) Every triple in $X_{n+1}$  has one and only one ancestor, which is a   triple of $X_n$.

  For simplicity, we denote
  $$S_0=a_0ba, S_1=aba, \ S_2=baa, \ S_3=baD, \
  H_1=aab,   \ H_2=bab.$$
   Figure 2 illustrates the offspring relationship, where $S\mapsto T$ means that $T$ is an offspring of $S$.
\begin{figure}[H]
  \centering
    \includegraphics[width=8 cm]{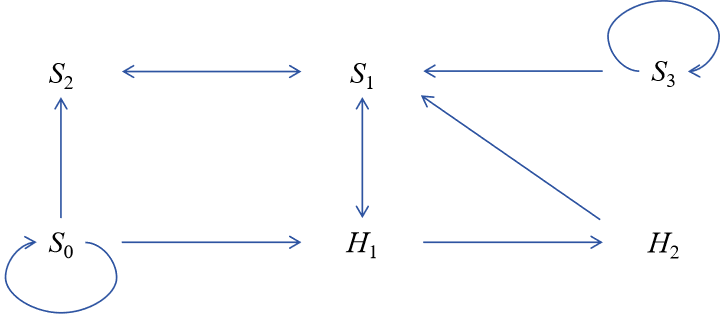}
    \caption{Offsprings of triples.}
    \label{fig:graph}
\end{figure}

Let $(Z_k)_{k=2}^n$ be a sequence such that $Z_k$ is a triple in $X_k$. We call
it a \emph{path} to $Z_n$ if $Z_{k+1}$ is an offspring of $Z_k$  for $2\leq k\leq n-1$.

\subsection{Transition matrix}
Suppose $(x_0,x_1,x_2)$ is a triple in $X_n$ with label $S$,
let $(y_0, y_1, y_2)$ be an offspring of $(x_0,x_1,x_2)$ with label $T$.
Then there is a $3\times 3$ matrix $M_{ST}$ such that
\begin{equation}
    \begin{bmatrix}
        W_{n+1}(y_0)  \\
        W_{m+2}(y_1)  \\
        W_{m+2}(y_2)
    \end{bmatrix}
    =M_{ST}
    \begin{bmatrix}
        W_{n}(x_0)  \\
        W_{n}(x_1)  \\
        W_{n}(x_2)
    \end{bmatrix},
\end{equation}
 An easy calculation shows that
\[
\begin{array}{ccc}
M_{S_0S_0}=
    \begin{bmatrix}
        p_1 & 0 & 0 \\
        p_2 & 0 & 0 \\
        0 & p_1 & 0
    \end{bmatrix},
    &
    M_{S_0S_2}=M_{S_1S_2}=
    \begin{bmatrix}
        p_2 & 0 & 0 \\
        0 & p_1 & 0 \\
        0 & p_2 & p_1
    \end{bmatrix},
    &
M_{S_0H_1}=M_{S_1H_1}=
    \begin{bmatrix}
        0 & p_1 & 0 \\
        0 & p_2 & p_1 \\
        0 & 0 & p_2
    \end{bmatrix},
\end{array}
\]
\[
\begin{array}{ccc}
M_{H_1H_2}=
    \begin{bmatrix}
        p_2 & 0 & 0 \\
        0 & p_1 & 0 \\
        0 & p_2 & 0
    \end{bmatrix},
    &
M_{H_1S_1}=
    \begin{bmatrix}
        0 & p_1 & 0 \\
        0 & p_2 & 0 \\
        0 & 0 & p_2
    \end{bmatrix};
&
M_{S_2S_1}=
    \begin{bmatrix}
        p_2 & p_1 & 0 \\
        0 & p_2 & 0 \\
        0 & 0 & p_1
    \end{bmatrix},
    \end{array}
    \]
    \[
    \begin{array}{ccc}
M_{H_2S_1}=
    \begin{bmatrix}
        p_2 & p_1 & 0 \\
        0 & p_2 & 0 \\
        0 & 0 & p_1
    \end{bmatrix};
&
M_{S_3S_1}=
    \begin{bmatrix}
        p_2 & p_1 & 0 \\
        0 & p_2 & 0 \\
        0 & 0 & p_1
    \end{bmatrix},
    &
M_{S_3S_3}=
    \begin{bmatrix}
        0 & p_2 & 0 \\
        0 & 0 & p_1 \\
        0 & 0 & p_2
    \end{bmatrix}.
\end{array}
\]

\begin{lemma}\label{lem:balanced}
If   $p_1=p_2=1/2$  and $(z_0,z_1,z_2)$ is a triple with label $H_1$ or $H_2$, then the corresponding weight triple is
 of the form $(y,y+z,z)/2$ or $(x,y,y)/2$  for some $x,y,z>0$ respectively.
\end{lemma}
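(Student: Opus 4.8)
The plan is to obtain both normal forms directly from the transition-matrix table, using that the weight triple of any triple is the image, under the appropriate matrix $M_{ST}$, of the weight triple of its unique ancestor (item (ii) of the offspring properties). First I would determine the possible parent labels. Scanning the matrices listed above (equivalently, reading Figure \ref{fig:graph} backwards along incoming edges), the target label $H_1$ occurs only for $M_{S_0H_1}$ and $M_{S_1H_1}$, while the target label $H_2$ occurs only for $M_{H_1H_2}$. Hence a triple labelled $H_1$ is always an offspring of a triple labelled $S_0$ or $S_1$, and a triple labelled $H_2$ is always an offspring of a triple labelled $H_1$. Since the $X_2$ label word $a_0baD$ contains neither $aab$ nor $bab$, any $H_1$- or $H_2$-triple first occurs at rank $n\geq 3$ and so indeed possesses an ancestor.

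The decisive point for the $H_1$ case is the coincidence $M_{S_0H_1}=M_{S_1H_1}$, which makes the conclusion independent of which of the two parents occurs. Writing $(W_0,W_1,W_2)$ for the parent's weight triple and setting $p_1=p_2=1/2$, the offspring's weight triple is
\[
\frac12
\begin{bmatrix} 0&1&0\\ 0&1&1\\ 0&0&1 \end{bmatrix}
\begin{bmatrix} W_0\\ W_1\\ W_2 \end{bmatrix}
=\frac12
\begin{bmatrix} W_1\\ W_1+W_2\\ W_2 \end{bmatrix},
\]
that is, $(y,y+z,z)/2$ with $y=W_1$ and $z=W_2$. For the $H_2$ case I would apply $M_{H_1H_2}$ with $p_1=p_2=1/2$ to the weight triple $(W_0,W_1,W_2)$ of the $H_1$-parent, obtaining
\[
\frac12
\begin{bmatrix} 1&0&0\\ 0&1&0\\ 0&1&0 \end{bmatrix}
\begin{bmatrix} W_0\\ W_1\\ W_2 \end{bmatrix}
=\frac12
\begin{bmatrix} W_0\\ W_1\\ W_1 \end{bmatrix},
\]
which is $(x,y,y)/2$ with $x=W_0$ and $y=W_1$.

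To close the argument I would check positivity of the parameters. Each coordinate $w$ of a parent triple lies in $X_{n-1}\setminus\{1\}$, so its weight $W_{n-1}(w)=\sum_{S_I(0)=w}p_I$ is a nonempty sum of strictly positive terms; hence $W_0,W_1,W_2>0$, giving $x,y,z>0$ as required.

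I do not expect a genuine obstacle here: the statement is a one-step consequence of the matrix table, and no induction on $n$ is needed because only the final transition is used. The only care required is the bookkeeping of incoming edges in Figure \ref{fig:graph}, together with the observation $M_{S_0H_1}=M_{S_1H_1}$ that removes the apparent ambiguity about which parent produced the $H_1$-triple.
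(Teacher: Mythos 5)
Your proof is correct and follows essentially the same route as the paper's: identify the possible parent labels from the offspring graph, note $M_{S_0H_1}=M_{S_1H_1}$, and apply the relevant transition matrix at $p_1=p_2=1/2$; your added checks on the existence of an ancestor and on positivity of the weights are harmless refinements of the same argument.
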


 \begin{proof} Suppose $(z_0, z_1,z_2)$ is an offspring of $(x_0,x_1,x_2)$ in $X_n$; denote the weight triple
of $(x_0,x_1,x_2)$ by $(x,y,z)$. If the label of $(z_0, z_1,z_2)$ is $H_1$, then the label  of  $(x_0,x_1,x_2)$ must be
$S_0$ or $S_1$, and (notice that $M_{S_0H_1}=M_{S_1H_1}$)
$$
 \begin{bmatrix}
 W_{n+1}(z_0)  \\
 W_{n+1}(z_1)  \\
  W_{n+1}(z_2)
    \end{bmatrix}
    =
     M_{S_0H_1}
  \begin{bmatrix}
 x  \\
  y  \\
   z
    \end{bmatrix}=
    \begin{bmatrix}
    p_1 y\\
    p_2 y+p_1 z\\
    p_2 z
    \end{bmatrix}
    =  \frac{1}{2}
     \begin{bmatrix}
 y  \\
  y+z  \\
   z
    \end{bmatrix}
    .
    $$
 Similarly,   if the label of $(z_0, z_1,z_2)$ is $H_2$, then
    the label of $(x_0, x_1,x _2)$ must be $H_1$, so  the weight triple of $(z_0, z_1,z_2)$
    is   $(x, y, y)/2$.
 \end{proof}

\subsection{Verifying the doubling property}


\begin{proof}[\textbf{Proof of Proposition \ref{prop:Yung}}] We say a vector $(w_1,w_2,w_3)$ is \emph{$2$-balanced}
if $(w_1+w_2)/(w_2+w_3)$ belongs to $[1/2, 2]$. Let $M^T$ denotes the transpose of a matrix $M$.

The 'if' part: Suppose $p_1=p_2=\frac{1}{2}$.
Let $(Z_k)_{k=2}^n$ be a path to $Z_n$. We shall show that the weight triple of $Z_n$ is
$2$-balanced.  It is seen that
 \begin{equation}\label{eq:cycle}
U_k:=(M_{S_2S_1}M_{S_1S_2})^k
=\frac{1}{2^{2k}}
    \begin{bmatrix}
        1 & 1 & 0 \\
        0 & 1 & 0 \\
        0 & 1 & 1
    \end{bmatrix}^k\\
=\frac{1}{2^{2k}}
    \begin{bmatrix}
        1 & k & 0 \\
        0 & 1 & 0 \\
        0 & k & 1
    \end{bmatrix}.
\end{equation}

Denote by $u_k$ the label of $Z_k$ for $2\leq k\leq n$.
It is seen that $u_2=S_0$ or $S_3$.
Let $t$ be the maximal integer such that $u_t\not\in  \{S_1, S_2\}$.

\textit{Case 1.}  $u_t\in \{H_1, H_2, S_3\}$.

    By Figure  \ref{fig:graph},
 \begin{equation}\label{eq:S1S2}
 u_2\dots u_n=u_2\dots u_{t-1}u_t(S_1S_2)^k \text{   or  }  u_2\dots u_{t-1}u_t(S_1S_2)^k S_1.
 \end{equation}

It $u_t=H_1$,  by Lemma \ref{lem:balanced},   $W(Z_t)=(y,y+z,z)/2$ for some $y,z>0$.
  In the first scenario of \eqref{eq:S1S2},  the weight triple of $Z_n$ is
\begin{eqnarray*}
\frac{1}{2}M_{S_1S_2}\cdot U_{k-1}\cdot M_{H_1S_1}\cdot
\begin{bmatrix}
y\\
y+z\\
z
\end{bmatrix}
=\frac{1}{2^{2k+1}}
    \begin{bmatrix}
        k(y+z) \\
        y+z \\
        k(y+z)+z
    \end{bmatrix}
\end{eqnarray*}
 which is $2$-balanced;
 in the second case,  the weight triple of $Z_n$ is
\begin{eqnarray*}
\frac{1}{2}U_k\cdot M_{H_1S_1}\cdot
\begin{bmatrix}
y\\
y+z\\
z
\end{bmatrix}
=\frac{1}{2^{2k+2}}
    \begin{bmatrix}
        (k+1)(y+z) \\
        y+z \\
        k(y+z)+z
    \end{bmatrix}
\end{eqnarray*}
 which is still $2$-balanced.

 If $u_t=H_2$, then the  the weight triple of $Z_t$ is   $ (x,y,y)/2$ for some $x,y>0$; if $u_t=S_3$, then $u_2\dots u_t=(S_3)^{t-1}$ and
 \begin{eqnarray*}
 W(Z_t)=M_{S_3S_3}^{t-2}W(Z_2)=\frac{1}{2^{t-2}}
 \begin{bmatrix}
 0 & 0 & 1 \\
        0 & 0 & 1 \\
        0 & 0 & 1
 \end{bmatrix}
   \begin{bmatrix}
   1/2^2 \\
        1/2^2 \\
        1/2^2
 \end{bmatrix}
 =\frac{1}{2^t}
   \begin{bmatrix}
   1 \\
        1 \\
        1
 \end{bmatrix}.
 \end{eqnarray*}
 Hence, similar as above, one can show that
  the weight triple of $Z_n$ is   $2$-balanced.

 \textit{Case 2.}  $u_t=S_0$.
   By Figure  \ref{fig:graph},
   $$u_2\dots u_n=(S_0)^{t-1}(S_2S_1)^k \text { or } (S_0)^{t-1}(S_2S_1)^kS_2. $$
 First, we have $W(Z_t)=(1,1,1)/2^{t}$.
 Secondly, we have
 $$W(Z_n)=U_{k-1}\cdot M_{S_2S_1}\cdot M_{S_0S_2}\cdot W(Z_t)=\frac{1}{2^{2k+t}}(k+1,1,k+1),$$
 $$\text{ or } \quad \quad  W(Z_n)=M_{S_1S_2}\cdot U_{k-1}\cdot M_{S_2S_1}\cdot M_{S_0S_2}\cdot W(Z_t)=\frac{1}{2^{2k+t+1}}(k+1,1,k+2).
 $$

 In all these cases, $W(Z_n)$ are $2$-balanced.
 The 'if' part is proved.

  The 'only if' part:  Similar to Theorem \ref{thm:main}, we may assume that $p_1<p_2$ by the symmetry of the IFS. Let $(Z_k)_{k=2}^{2\ell+2}$ be the path with    $u_2\cdots u_{2\ell+2}=(S_0S_2)(S_1S_2)^{\ell-1}S_1$.
  Denote
\[
    M=M_{S_2S_1}M_{S_1S_2}=\begin{bmatrix}
        p_2^2 & p_1^2 & 0 \\
        0 & p_1p_2 & 0 \\
        0 & p_1p_2 & p_1^2
    \end{bmatrix},
\]

then\[M^\ell=
    \begin{bmatrix}
        p_2^{2\ell} & \Sigma_{i=2}^{\ell+1}p_1^{i}p_2^{2\ell-i} & 0 \\
        0 & p_1^{\ell}p_2^{\ell} & 0 \\
        0 & \Sigma_{i=\ell}^{2\ell-1}p_1^{i}p_2^{2\ell-i} & p_1^{2\ell}
    \end{bmatrix}.
\]
 Notice that $W(Z_2)=(p_1^2,p_1p_2,p_1p_2)$ and
$$W(Z_{2\ell +2})=(M_{S_2S_1}M_{S_1S_2})^{\ell-1}M_{S_2S_1}M_{S_0S_2}W(Z_2)=M^\ell W(Z_2).$$
Therefore,  the weight triple of $Z_{2\ell+2}$ is
$$W(Z_{2\ell +2})=(p_1^2p_2^{2\ell}+\Sigma_{i=3}^{\ell+2}p_1^{i}p_2^{2(\ell+1)-i}, p_1^{\ell+1}p_2^{\ell+1}, \Sigma_{i=\ell+1}^{2\ell+1}p_1^{i}p_2^{2(\ell+1)-i}).$$
It follows that

$$R_\ell:=\frac{p_1^2p_2^{2\ell}+\Sigma_{i=3}^{\ell+2}p_1^{i}p_2^{2(\ell+1)-i}+ p_1^{\ell+1}p_2^{\ell+1}}
{p_1^{\ell+1}p_2^{\ell+1}+ \Sigma_{i=\ell+1}^{2\ell+1}p_1^{i}p_2^{2(\ell+1)-i}}
\geq \frac{p_1^2p_2^{2\ell}}{(l+2)p_1^{\ell+1}p_2^{\ell+1}}=\frac{1}{\ell+2}(\frac{p_2}{p_1})^{\ell-1}.
$$
So
 $\lim_{\ell\to +\infty}R_\ell=+\infty.$
This means that $\mu_\mathbf{p}$ is not doubling on $[0,1]$ when $p_1<p_2$.  The 'only if' part is proved.
\end{proof}

\end{document}